\newtheorem{theorem}{Theorem}[section]
\newtheorem{corollary}[theorem]{Corollary}
\newtheorem{definition}[theorem]{Definition}
\newtheorem{lemma}[theorem]{Lemma}
\newtheorem{proposition}[theorem]{Proposition}
\newenvironment{proof}{\begin{trivlist}\item[]{\it
Proof.}}{\hfill$\square$\end{trivlist}}
\def\mc{{\mathbb{C}}}
\def\mn{{\mathbb{N}}}
\def\gl{{\mathrm{GL}}}
\def\ooo{{\mathrm{O}}}
\def\monoms{{\mathcal{M}}}
\def\freeinvar{{\mathcal{F}}}
\def\relthree{J}
\def\relone{J}
\def\reltwo{J}
\def\partitions{{\mathrm{Par}}}
\def\unipotent{{\mathrm{U}}}
\def\torus{{\mathbb{T}}}
\def\diag{{\mathrm{diag}}}
\def\mz{{\mathbb{Z}}}
\def\Span{{\mathrm{Span}}}
\def\mult{{\mathrm{mult}}}
\def\basis{{\mathcal{G}}}
\def\primary{P}
\def\secondary{{\mathbf{S}}}
\def\trysecondary{S}
\def\mingen{Q}
\begin{document}
\title{Multisymmetric polynomials in dimension three} 
\author{M\'aty\'as Domokos ${}^a$ \thanks{%Corresponding author. 
Partially supported by 
OTKA NK72523 and K61116.}\; 
\quad and Anna Pusk\'as ${}^b$
\\ 
\\ 
{\small ${}^a$ R\'enyi Institute of Mathematics, Hungarian Academy of 
Sciences,} 
\\ {\small 1053 Budapest, Re\'altanoda utca 13-15., Hungary } 
\\{\small E-mail: domokos@renyi.hu } 
\\ 
\\
{\small ${}^b$ Columbia University, Department of Mathematics,}
\\ {\small MC 4406, 2990 Broadway,  New York, NY 10027, USA.}  
\\ {\small E-mail:  apuskas@math.columbia.edu}
}
  
\date{}
\maketitle 
\begin{abstract} 
The polarizations of one relation of degree five and two relations of degree six minimally generate the ideal of relations among a minimal generating system of the algebra of multisymmetric polynomials 
in an arbitrary number of three-dimensional vector variables. 
In the general case of $n$-dimensional vector variables, a relation of degree $2n$ among the polarized power sums is presented such that it is not contained in the ideal generated by lower degree relations. 
\end{abstract}

\noindent MSC: 13A50, 14L30, 20G05

\noindent {\it Keywords: multisymmetric polynomials, ideal of relations, highest weight vectors} 

\section{Introduction}\label{sec:intro} 

The symmetric group $S_n$ acts on the $n$-dimensional complex vector space 
$V:=\mc^n$ by permuting coordinates. Consider the diagonal action of $S_n$ on the space 
$V^m:=V\oplus\cdots\oplus V$ of $m$-tuples of vectors from $V$. 
The {\it algebra of multisymmetric polynomials} is the corresponding ring of invariants 
$R_{n,m}:=\mc[V^m]^{S_n}$, consisting of the polynomial functions on $V^m$ that are constant along the $S_n$-orbits.  

In the special case $m=1$, $R_{n,1}$ is a polynomial ring generated by the elementary symmetric polynomials (or by the first $n$ power sums). It is classically known (see \cite{schlafli}, 
\cite{macmahon}, \cite{weyl}) that the polarizations of the elementary symmetric polynomials constitute a minimal $\mc$-algebra generating system of $R_{n,m}$ for an arbitrary $m$. 
The ideal of relations among these generators is not completely understood, although it was classically studied in \cite{junker1}, \cite{junker2}, \cite{junker3}, and in a couple of more recent papers (see the references in Section~\ref{sec:prel}). 

An explicit finite presentation of $R_{n,m}$ by generators and relations is known,  
see Proposition~\ref{prop:general} below. 
Note that the price for having a uniform description of the ideal of relations in Proposition~\ref{prop:general} is the inclusion of redundant elements in the system of generators. 

In the present paper for $n=3$ we determine a minimal system of generators of the ideal of relations among a minimal generating system of $R_{3,m}$. We exploit the natural action of the general linear group $\gl_m$ on $R_{3,m}$. 
Identify $V^m$ with the space $\mc^{n\times m}$ of $n\times m$ matrices. 
The complex general linear group $\gl_m$ acts on $\mc^{n\times m}$ by matrix multiplication from the right:  
$x\mapsto xg^{-1}$ ($x\in\mc^{n\times m}$, $g\in\gl_m$). As usual, this induces 
an action of $\gl_m$ on the coordinate ring $\mc[V^m]$ by linear substitution of variables. 
Since the action of $\gl_m$ on $V^m$ commutes with the action of $S_n$, the algebra $R_{n,m}$ is a 
$\gl_m$-submodule in the coordinate ring $\mc[V^m]$. 
In particular, we may choose a minimal system of homogeneous $\mc$-algebra generators of $R_{n,m}$ whose $\mc$-linear span is a $\gl_m$-submodule $W_{n,m}$ in $\mc[V^m]$. Write $S(W_{n,m})$ for the symmetric tensor algebra of $W_{n,m}$. 
(This is polynomial ring, with one variable associated to each element of a fixed basis of $W_{n,m}$.)   
Endow the algebra $S(W_{n,m})$ with the $\gl_m$-module structure induced by the representation of $\gl_m$ on $W_{n,m}$. 
Consider the natural $\mc$-algebra surjection $\varphi:S(W_{n,m})\to R_{n,m}$ extending the identity map on $W_{n,m}$. 
The ideal $\ker(\varphi)$ of relations among the chosen generators of  $R_{n,m}$ is a $\gl_m$-submodule of $S(W_{n,m})$. 

The coordinate ring $\mc[V^m]=\mc[x_{ij}\mid i=1,\ldots,n; j=1,\ldots,m]$ is an $nm$-variable polynomial algebra, where $x_{ij}$ stands for the $i$th coordinate function on the $j$th vector component.  
Given a monomial $w=x_1^{\alpha_1}\cdots x_m^{\alpha_m}$ in the $m$-variable polynomial algebra 
$\mc[x_1,\ldots,x_m]$, 
set 
$$[w]:=\sum_{i=1}^nx_{i1}^{\alpha_1}\cdots x_{im}^{\alpha_m}.$$ 
These elements of $R_{n,m}$ are called the {\it polarized power sums}, and 
\begin{equation}\label{eq:min-gen-sys}
\{[x_1^{\alpha_1}\cdots x_m^{\alpha_m}]\mid \sum_{j=1}^m\alpha_j\leq n\}\end{equation}  
is a minimal system of $\mc$-algebra generators of $R_{n,m}$.  
 
Denote by $W_{n,m}$ the subspace of $R_{n,m}$ spanned by the set (\ref{eq:min-gen-sys}). 
This is a $\gl_m$-submodule.  
In the case $n=3$, we present three explicit elements in the kernel of the surjection 
$\varphi:S(W_{3,m})\to R_{3,m}$ such that each of them generates an irreducible $\gl_m$-submodule in 
$S(W_{3,m})$, and the union of any $\mc$-bases of these three irreducible $\gl_m$-submodules constitutes a minimal generating system of the ideal $\ker(\varphi)$ (see Theorem~\ref{thm:main}). 
 
 For arbitrary $n$ we point out a connection between multisymmetric polynomials and vector invariants of the full orthogonal group, and use this to show that a homogeneous system of generators of the ideal of relations between the polarized power sums must contain a relation of degree $2n$ (see Theorem~\ref{thm:lowerbound} for the precise statement). 
  
%%%%%%%%%%%%%%%%%%%%%%%%%%%%

\section{Preliminaries}\label{sec:prel} 

Denote by $\monoms_m$ the set of monomials in the polynomial algebra 
$\mc[x_1,\ldots,x_m]$, and for a natural number $d$ denote by $\monoms_m^d$ the subset of monomials of degree at most $d$. To each $w\in\monoms_m$ associate an indeterminate $t(w)$, and take the commutative polynomial algebra $\freeinvar_{n,m}:=\mc[t(w)\mid w\in\monoms_m]$ in infinitely many variables. 
For each $d\in\mn$ it contains the subalgebra $\freeinvar_{n,m}^d:=\mc[t(w)\mid w\in\monoms_m^d]$. 
In particular, we identify $\freeinvar_{n,m}^n$ and $S(W_{n,m})$ in the obvious way: by definition, 
$\{[w]\mid w\in\monoms_m^n\}$ is a $\mc$-vector space basis of $W_{n,m}$, and 
the map 
$[w]\mapsto t(w)$ extends uniquely to a $\mc$-algebra isomorphism $S(W_{n,m})\cong \freeinvar_{n,m}^n$. 
We denote by $\varphi_{n,m}$ the surjection 
$\varphi_{n,m}:\freeinvar_{n,m}\to R_{n,m}$ given by $\varphi(t(w))=[w]$ for all $w\in\monoms_m$. 
The restriction of $\varphi_{n,m}$ to $\freeinvar_{n,m}^d$ will  be denoted by $\varphi_{n,m}^d$; it is a surjection onto $R_{n,m}$ whenever $d\geq n$. 
To simplify notation later in the text, we sometimes write $\freeinvar$ instead of $\freeinvar_{n,m}^n$ 
and $\varphi$ instead of $\varphi_{n,m}^n$. 

We recall some elements in the kernel of $\varphi_{n,m}$. 
By a {\it distribution} of the set $\{1,\ldots,n+1\}$ we mean a set $\pi:=\{\pi_1,\ldots,\pi_h\}$ of pairwise disjoint non-empty subsets whose union is $\bigcup_{i=1}^h\pi_i=\{1,\ldots,n+1\}$. 
Write $D_{n+1}$ for the set of distributions of $\{1,\ldots,n+1\}$.  
Take monomials $w_1,\ldots,w_{n+1}\in\monoms_m$, and set 
\begin{equation}\label{eq:fundrel}\Psi_{n+1}(w_1,\ldots,w_{n+1})=
\sum_{\pi\in D_{n+1}}
\prod_{\pi_i\in\pi}(-1)(|\pi_i|-1)!\cdot t(\prod_{s\in\pi_i} w_s).\end{equation}

\begin{proposition}\label{prop:general} 
The kernel of the surjection 
$\varphi_{n,m}^{n^2-n+2}:\freeinvar_{n,m}^{n^2-n+2}\to R_{n,m}$ 
is generated as an ideal by the elements 
$\Psi_{n+1}(w_1,\ldots,w_{n+1})$, 
ranging over all choices of monomials $w_i\in\monoms_m$ with $\deg(w_1\cdots w_{n+1})\leq n^2-n+2$.   
\end{proposition}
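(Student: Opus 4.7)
The plan is to establish two directions: that each $\Psi_{n+1}(w_1,\ldots,w_{n+1})$ lies in $\ker(\varphi_{n,m})$, and that these elements generate the kernel of $\varphi_{n,m}^{n^2-n+2}$ as an ideal.

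First I would verify the vanishing by direct evaluation. For $x\in\mc^{n\times m}$ set $y_s^{(i)}:=w_s(x_{i\cdot})$ for $i=1,\ldots,n$ and $s=1,\ldots,n+1$; then $\varphi_{n,m}\bigl(t(\prod_{s\in\pi_j}w_s)\bigr)(x)=\sum_{i=1}^n\prod_{s\in\pi_j}y_s^{(i)}$, so $\Psi_{n+1}(w_1,\ldots,w_{n+1})(x)$ becomes a universal polynomial in the $n(n+1)$ scalars $y_s^{(i)}$. I would then identify this polynomial as the full polarization, in $n+1$ vector arguments of length $n$, of the Newton--Girard identity expressing $e_{n+1}$ in terms of power sums. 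Since $e_{n+1}$ vanishes identically on any $n$-tuple of scalars, its polarization vanishes on $(n+1)$-tuples of $n$-dimensional vectors, giving $\Psi_{n+1}(w_1,\ldots,w_{n+1})\in\ker(\varphi_{n,m})$.

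For the generation statement, my approach would be a straightening/rewriting argument. In $\Psi_{n+1}(w_1,\ldots,w_{n+1})$, the summand coming from the finest distribution $\{\{1\},\ldots,\{n+1\}\}$ is $(-1)^{n+1}t(w_1)\cdots t(w_{n+1})$. Fixing a monomial order on $\freeinvar_{n,m}$ in which a product $t(w_{i_1})\cdots t(w_{i_k})$ outweighs the single variable $t(w_{i_1}\cdots w_{i_k})$, this term becomes the leading monomial, so each $\Psi_{n+1}$-relation acts as a rewriting rule that replaces an $(n+1)$-fold product $\prod_s t(w_s)$ by a $\mc$-combination of products with at most $n$ factors (at the cost of allowing the surviving factors $t(v)$ to carry monomials $v$ of higher degree). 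Iterating this reduction modulo the ideal generated by the $\Psi_{n+1}$'s sends any element of $\freeinvar_{n,m}^{n^2-n+2}$ to a normal form supported on monomials $\prod_{j=1}^{k} t(w_{(j)})$ with $k\leq n$. To close the loop, I would match these normal monomials with the classical basis of $R_{n,m}$ given by monomial symmetric functions in $n$ vector variables, thereby checking linear independence in $R_{n,m}$ and concluding that no further relations are needed.

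The main obstacle, in my view, is the sharp degree bound $n^2-n+2$. A crude estimate is that rewriting a product of $n+1$ generators $[w_s]$, each with $\deg(w_s)\leq n$, might require $\Psi_{n+1}$'s of total monomial degree $\deg(w_1\cdots w_{n+1})$ as large as $n(n+1)=n^2+n$. Sharpening this to $n^2-n+2$ would require showing that, after using lower-degree relations, the nontrivial $(n+1)$-tuples $(w_1,\ldots,w_{n+1})$ occurring in the reduction tree may always be assumed to satisfy $\deg(w_1\cdots w_{n+1})\leq n^2-n+2$. I expect this to involve a delicate case analysis, combined with a $\gl_m$-equivariance argument: since $\ker(\varphi_{n,m})$ is a $\gl_m$-submodule, it is enough to generate a distinguished weight vector in each $\gl_m$-isotypic component, and these vectors can be controlled to sit at the stated degree.
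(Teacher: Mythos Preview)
Your outline for the two preliminary steps is sound and matches what the paper calls the ``infinite version'': the identification of $\varphi(\Psi_{n+1})$ with a polarization of the Newton--Girard relation for $e_{n+1}$ is correct, and the straightening argument (reducing modulo the $\Psi$'s to products with at most $n$ factors, then invoking the power-sum/monomial basis of $R_{n,m}$ indexed by vector partitions of length $\leq n$) is the standard route to showing that all $\Psi_{n+1}$'s generate $\ker(\varphi_{n,m})$ in the infinitely generated ring $\freeinvar_{n,m}$.

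The genuine gap is exactly where you locate it: the degree bound $n^2-n+2$. The paper does not prove Proposition~\ref{prop:general} internally; it cites it from \cite{domokos}, where the bound is obtained not by refining the straightening procedure but by invoking Derksen's general degree bound for syzygies of invariant rings \cite{derksen}, combined with an argument of Garsia--Wallach type \cite{haiman}. This is a substantial external ingredient from invariant theory, and your proposed substitute (case analysis plus $\gl_m$-equivariance to control the degrees of highest weight vectors) is not a proof: $\gl_m$-equivariance reduces the problem to highest weight vectors, but gives no a~priori bound on the degree at which these occur. Note also that your discussion of the obstacle (``rewriting a product of $n+1$ generators $[w_s]$, each with $\deg(w_s)\leq n$'') is phrased for $\freeinvar_{n,m}^n$, whereas the proposition concerns $\freeinvar_{n,m}^{n^2-n+2}$; even after sorting this out, the straightening steps leave $\freeinvar_{n,m}^{n^2-n+2}$ in general (the merged variables $t(\prod_{s\in\pi_i}w_s)$ can have degree far exceeding $n^2-n+2$), so the reduction does not stay in the subring where you need it. Without Derksen's bound or an equivalent, the argument is incomplete.
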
 

Proposition~\ref{prop:general} is a special case of a result in \cite{domokos} dealing with vector invariants of a class of complex reflection groups. In loc. cit. we first gave a simple short proof of an infinite version about the kernel of $\freeinvar_{n,m}\to R_{n,m}$ (see also   \cite{berele}, \cite{bukhshtaber-rees}, \cite{dalbec}, \cite{vaccarino} for related work). Then we applied 
Derksen's general degree bound for syzygies from 
\cite{derksen} and ideas of Garsia and Wallach from \cite{haiman} to derive in particular the above finite presentation of $R_{n,m}$ in \cite{domokos}. 

To produce elements in $\ker(\varphi)=\ker(\varphi_{n,m}^n)=\ker(\varphi_{n,m})\cap \freeinvar_{n,m}^n$ 
we shall start with the relations in Proposition~\ref{prop:general} belonging to $\ker(\varphi_{n,m})$ and eliminate the variables $t(w)$ with $\deg(w)>n$. There is  one exception,   
we construct an element $\relthree$ in $\ker(\varphi_{n,n}^2)=\ker(\varphi)\cap \freeinvar_{n,n}^2$ by another method as follows: 
\begin{equation}\label{eq:relthree}\relthree:=\det\left(\begin{array}{ccccc} t(x_1^2)& t(x_1x_2)&\dots &t(x_1x_n) & t(x_1) 
\\ t(x_2x_1) &t(x_ 2^2) &\cdots &t(x_2 x_n)& t(x_2)\\
\vdots & \vdots&\ddots &\vdots &\vdots \\
t(x_nx_1)& t(x_nx_2)& \cdots & t(x_n^2) & t(x_n)\\
t(x_1)& t(x_2) & \cdots & t(x_n) & n\end{array}\right).
\end{equation}
 
\begin{proposition}\label{prop:gram} 
The element $J$ belongs to $\ker(\varphi_{n,n}^2)$, and $g\cdot J=\det^2(g)J$ for any $g\in\gl_n$. 
\end{proposition}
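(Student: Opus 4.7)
The plan is to recognize that, under $\varphi$, the $(n+1)\times(n+1)$ matrix $\tilde M$ appearing in \eqref{eq:relthree} becomes a \emph{Gram matrix}, which simultaneously forces its determinant to vanish and pins down its $\gl_n$-weight. Write $X=(x_{ij})_{1\le i,j\le n}$ for the generic coordinate matrix on $V^n=\mc^{n\times n}$ and let $\mathbf{1}=(1,\ldots,1)^T\in\mc^n$. From $[x_ix_j]=(X^TX)_{ij}$, $[x_i]=(X^T\mathbf{1})_i$, and $n=\mathbf{1}^T\mathbf{1}$, the block structure of the matrix in \eqref{eq:relthree} gives
$$
\varphi(\tilde M)\;=\;\bigl(X\mid\mathbf{1}\bigr)^{T}\bigl(X\mid\mathbf{1}\bigr),
$$
where $(X\mid\mathbf{1})$ is the $n\times(n+1)$ block matrix obtained by adjoining $\mathbf{1}$ to $X$. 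Since this matrix has only $n$ rows, the product on the right has rank at most $n$, so its determinant vanishes identically on $V^n$; this proves $\relthree\in\ker(\varphi_{n,n}^2)$.

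For the weight statement, I would unwind the paper's conventions by computing how each entry of $\tilde M$ transforms. Starting from the substitution $x_{ij}\mapsto\sum_l g_{lj}x_{il}$ induced by $x\mapsto xg^{-1}$ on $\mc^{n\times n}$, one sees that the abstract action on $\mc[x_1,\ldots,x_n]$ is $x_j\mapsto\sum_l g_{lj}x_l$, hence $t(x_i)\mapsto\sum_l g_{li}\,t(x_l)$ and $t(x_ix_j)\mapsto\sum_{l,l'}g_{li}g_{l'j}\,t(x_lx_{l'})$. Setting $v=(t(x_i))_i$ and $M=(t(x_ix_j))_{i,j}$, this amounts to $g\cdot v=g^Tv$ and $g\cdot M=g^TMg$, while the corner entry $n$ is fixed. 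Consequently
$$
g\cdot\tilde M\;=\;\begin{pmatrix} g^T & 0 \\ 0 & 1\end{pmatrix}\tilde M\begin{pmatrix} g & 0 \\ 0 & 1\end{pmatrix},
$$
and taking determinants yields $g\cdot \relthree=\det(g)^2\relthree$.

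No serious obstacle arises: the Gram factorization $A^TA$ with $A=(X\mid\mathbf{1})$ makes the rank-deficiency argument transparent and, at the same time, exhibits the claimed equivariance by conjugation against $\diag(g,1)$ and its transpose. The only point requiring real care is the bookkeeping of the transpose/inverse conventions relating the $\gl_n$-action on $V^n$ to its induced actions on $R_{n,n}$ and on $\freeinvar_{n,n}^n$, so that the weight emerges as $\det^2$ rather than $\det^{-2}$ or $1$.
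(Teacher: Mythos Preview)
Your proof is correct and follows essentially the same route as the paper: you identify $\varphi(\tilde M)$ as the Gram matrix $(X\mid\mathbf{1})^T(X\mid\mathbf{1})$ of an $n\times(n+1)$ matrix to obtain vanishing, and then track the $\gl_n$-action entrywise to see that $\tilde M$ is sent to $\diag(g^T,1)\,\tilde M\,\diag(g,1)$, whence the $\det^2$ weight. The only difference is cosmetic---the paper writes the augmented matrix as a single $X$ rather than as a block $(X\mid\mathbf{1})$, and states the action in slightly more abstract terms---but the substance is identical.
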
 

\begin{proof} Applying $\varphi$ to the entries of the $(n+1)\times (n+1)$ matrix in (\ref{eq:relthree}) 
we get the matrix $X^T\cdot X$, where 
$$X=\left(\begin{array}{ccccc}x_{11}& x_{12}&\cdots & x_{1n}& 1\\
x_{21}&x_{22}&\cdots & x_{2n}& 1\\
\vdots &\vdots & \ddots& \vdots &\vdots\\
x_{n1}&x_{n2}&\cdots & x_{nn} &1\end{array}\right),$$ 
and $X^T$ denotes the transpose of $X$. Since $X$ has size $n\times (n+1)$, the rank of $X^TX$ is at most $n$, hence $\det(X^TX)=0$, showing that $J\in\ker(\varphi)$. 

To explain the second statement, let us describe the $\gl_m$-action on $\freeinvar=S(W_{n,m})$ more explicitly. 
First of all, $\gl_m$ acts by $\mc$-algebra automorphisms on the $m$-variable polynomial ring 
$\mc[x_1,\ldots,x_m]$. Namely, $g\in\gl_m$ sends the variable $x_i$ to the $i$th entry of 
the row vector $(x_1,x_2,\ldots,x_m)\cdot g$ (matrix multiplication). Note that the degree $d$ homogeneous component of $\mc[x_1,\ldots,x_m]$ is $\gl_m$-stable, and as a $\gl_m$-module, it is isomorphic to the $d$th symmetric tensor power of the natural $m$-dimensional representation of $\gl_m$ on the space $\mc^m$ of column vectors. 
Write $U$ for the $\mc$-linear span of $\monoms_m^n$ in $\mc[x_1,\ldots,x_m]$ (so $U$ is the sum of the homogeneous components of degree $1,2,\ldots,n$). 
It is easy to see that the $\mc$-linear map from $U\to R_{n,m}$ induced by  $w\mapsto [w]$ ($w\in\monoms_m^n$)  
is a $\gl_m$-module isomorphism, so we have $U\cong W_{n,m}$ as a $\gl_m$-module. 
This shows that the effect of $g\in\gl_m$ on a variable $t(w)$ of $\freeinvar_{n,m}^n$ is given by the formula $g\cdot t(w)=t(g\cdot w)$, 
where for an arbitrary polynomial $f=\sum_{w\in\monoms_m}a_ww\in\mc[x_1,\ldots,x_m]$, 
we shall mean by $t(f)$ the element $\sum_{w\in\monoms_m}a_wt(w)\in \freeinvar_{n,m}$. 

These considerations show that applying $g\in\gl_n$ to all entries of the matrix $Y$ in (\ref{eq:relthree}) 
we get the matrix $(\tilde g)^TY\tilde g$, where $\tilde g$ stands for the $(n+1)\times (n+1)$
block diagonal  matrix 
$\left(\begin{array}{cc}g&0\\0&1\end{array}\right)$. 
Therefore our statement follows by multiplicativity of the determinant. 
\end{proof}

The idea of applying the $\gl_m$-module structure in the study of generators and relations of rings of invariants $\mc[V^m]^G$ (where $G$ is a group of linear transformations on $V$) is well known, see for example section 5.2.7 in \cite{goodman-wallach}, or \cite{benanti-drensky} for a recent application. 
We collect some necessary facts on representations of $\gl_m$ on polynomial rings. 

The representation of $\gl_m$ on $S(W_{n,m})$ is a {\it polynomial representation} (cf.  \cite{macdonald}). Recall that polynomial $\gl_m$-modules are completely reducible. The isomorphism classes of irreducible polynomial representations are labeled by the set $\partitions_m$ of partitions with at most $m$ non-zero parts. By a partition $\lambda=(\lambda_1,\ldots,\lambda_m)\in\partitions_m$ we mean a decreasing sequence $\lambda_1\geq\lambda_2\geq\cdots\geq\lambda_m\geq 0$ of non-negative integers, and write $h(\lambda)$ for the number of non-zero parts of $\lambda$. 
Given $\lambda\in\partitions_m$ we denote by $V_{\lambda}$ a copy of 
an irreducible polynomial $\gl_m$-module labeled by $\lambda$. 
For example, $V_{(k)}$ is isomorphic to the degree $k$ homogeneous component of $\mc[x_1,\ldots,x_m]$, the $k$th symmetric tensor power of the natural $\gl_m$-module $\mc^m$, 
and $V_{(1^k)}:=V_{(1,\ldots,1)}\cong \bigwedge^k(\mc^m)$, the $k$th exterior power of 
$\mc^m$. We have the $\gl_m$-module isomorphism 
$$W_{n,m}\cong V_{(1)}\oplus V_{(2)}\oplus\cdots\oplus V_{(n)}.$$ 

Write $\unipotent_m$ for the subgroup of unipotent upper triangular matrices in $\gl_m$, and write 
$\torus\cong(\mc^{\times})^m=\mc^{\times}\times\cdots\times\mc^{\times}$ for the maximal torus consisting of diagonal matrices. Given a polynomial $\gl_m$-module $M$, we say that a non-zero $v\in M$ is a 
{\it highest weight vector of weight} $\lambda$ if $v$ is fixed by $\unipotent_m$, $\torus$ stabilizes 
$\mc v$ and acts on it by the weight $\lambda$, i.e.  
$\diag(z_1,\ldots,z_m)\cdot v=(z_1^{\lambda_1}\cdots z_m^{\lambda_m})v$. 
In this case the $\gl_m$-submodule generated by $v$ is isomorphic to $V_{\lambda}$. 
The irreducible $\gl_m$-module $V_{\lambda}$ contains a unique 
(up to non-zero scalar multiples) vector fixed by $\unipotent_m$, and (up to non-zero scalar multiples), it is the only vector in $V_{\lambda}$ on which $\torus$ acts by the weight $\lambda$. 

The action of $\torus$ defines a $\mz^m$-grading on any $\gl_m$-module $M$: 
$v\in M$ is {\it multihomogeneous of multidegree} $\alpha=(\alpha_1,\ldots,\alpha_m)$ if 
$\diag(z_1,\ldots,z_m)\cdot v=(z_1^{\alpha_1}\cdots z_m^{\alpha_m})v$ for all 
$\diag(z_1,\ldots,z_m)\in\torus$. 
In particular, $\mc[V^m]$, $R_{n,m}$, $\freeinvar_{n,m}$ become $\mz^m$-graded algebras this way, and the map $\varphi:\freeinvar_{n,m}\to R_{n,m}$ is multihomogeneous. 
The polarized power sum $[x_1^{\alpha_1}\cdots x_m^{\alpha_m}]$ is multihomogeneous of multidegree $(\alpha_1,\ldots,\alpha_m)$. 

The above $\mz^m$-grading is a refinement of the ususal $\mz$-grading on the polynomial algebra 
$\mc[V^m]$. Similarly, $R_{n,m}$ and $\freeinvar_{n,m}$ are graded algebras, the degree of a multihomogeneous element of multidegree $(\alpha_1,\ldots,\alpha_m)$ being 
$\sum_{j=1}^m\alpha_j$. 
Denote by $\freeinvar^{(+)}$ the sum of homogeneous components of positive degree in the graded algebra $\freeinvar=\freeinvar_{n,m}^n$. Then $\freeinvar^{(+)}$ is a maximal ideal and 
$\freeinvar/\freeinvar^{(+)}\cong\mc$. The ideal $\ker(\varphi)=\ker(\varphi_{n,m}^n)$ is homogeneous. 
By a {\it minimal system of generators of} $\ker(\varphi)$ we mean a set of homogeneous elements that constitutes an  irredundant generating system  of the ideal $\ker(\varphi)$. 
It is well known that a  subset 
$N\subset\ker(\varphi)$ of homogeneous elements is a minimal generating system of the ideal $\ker(\varphi)$ if and only if $N$ is a basis of a $\mc$-vector space direct complement of  
$\freeinvar^{(+)}\ker(\varphi)$ in $\ker(\varphi)$. 
This shows that although the minimal generating system  $N$ is not unique, for each degree the number of elements 
in $N$ of that degree is uniquely determined. Even more, we may assume that $N$ spans a 
$\gl_m$-submodule $\Span\{N\}$ of $\freeinvar$, and the $\gl_m$-module structure of 
$\Span\{N\}$ is uniquely determined by $\ker(\varphi)$. 
Indeed, note that an irreducible $\gl_m$-submodule of $\freeinvar$ isomorphic to $V_{\lambda}$ is contained  
in the homogeneous component of $\freeinvar$ of degree $|\lambda|=\sum_{j=1}^m\lambda_j$. 
Therefore we may take a $\gl_m$-module direct complement of $\freeinvar^{(+)}\ker(\varphi)$ in 
$\ker(\varphi)$, and a homogeneous $\mc$-basis of this complement is a minimal generating system of $\ker(\varphi)$ with the desired properties. 

\begin{definition}\label{def:mult} 
For $\lambda\in\partitions_m$ denote by $\mult_{n,m}(\lambda)$ the multiplicity of the irreducible $\gl_m$-module $V_{\lambda}$ as a summand in the factor $\gl_m$-module 
$\ker(\varphi_{n,m}^n)/(\freeinvar_{n,m}^n)^{(+)}\cdot \ker(\varphi_{n,m}^n)$, and for a partition 
$\lambda$ with more than $m$ non-zero parts set   
$\mult_{n,m}(\lambda)=0$.     \end{definition}

Next we recall that the multiplicities $\mult_{n,m}(\lambda)$ have only a mild dependence on the parameter $m$. 
Note that a highest weight vector $f$ of weight $\lambda$ in $\freeinvar_{n,m}$ is in particular multihomogeneous of multidegree $\lambda$, hence is 
contained in the subalgebra $\freeinvar_{n,h(\lambda)}$. 

\begin{lemma}\label{lemma:m1m2} 
Let $f$ be a multihomogeneous element of multidegree $\lambda$ in $\freeinvar_{n,h(\lambda)}$, 
and let $m_1\geq m_2\geq h(\lambda)$ be positive integers. 
Then $f$ is a highest weight vector for the action of $\gl_{m_1}$ on $\freeinvar_{n,m_1}$ if and only if 
it is a highest weight vector for the action of $\gl_{m_2}$ on $\freeinvar_{n,m_2}$. 
\end{lemma}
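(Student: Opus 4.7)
The plan is to reduce everything to the action of $\unipotent_{h(\lambda)}$ and $\torus_{h(\lambda)}$, exploiting the fact that $f$ lives in the subalgebra $\freeinvar_{n,h(\lambda)}$, which embeds in both $\freeinvar_{n,m_1}$ and $\freeinvar_{n,m_2}$ via $\monoms_{h(\lambda)}^n \subseteq \monoms_{m_i}^n$.

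First I would dispose of the torus condition. Since $f \in \freeinvar_{n,h(\lambda)}$, the variables $t(w)$ with $w$ involving some $x_j$ for $j > h(\lambda)$ do not appear in $f$. The diagonal matrix $\diag(z_1,\ldots,z_{m_i})$ acts on each $t(w)$ with $w \in \monoms_{h(\lambda)}^n$ by $z_1^{\beta_1}\cdots z_{h(\lambda)}^{\beta_{h(\lambda)}}$ where $\beta$ is the multidegree of $w$, and the $z_j$ with $j > h(\lambda)$ do not appear. Hence $f$ has $\torus_{m_i}$-weight equal to $\lambda$ padded with zeros, independently of $m_i$.

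Next, for the unipotent condition, I would work with the standard root subgroups $U_{ij}(c) := I + cE_{ij}$ ($i < j$), which together generate $\unipotent_{m_i}$. A direct computation from the formula $g \cdot x_l = \sum_k x_k g_{kl}$ shows $U_{ij}(c)$ fixes every $x_l$ with $l \ne j$ and sends $x_j \mapsto x_j + cx_i$. From this I would extract two observations. First, if $j > h(\lambda)$, then $U_{ij}(c)$ fixes every variable $x_l$ with $l \le h(\lambda)$, hence fixes every $t(w)$ with $w \in \monoms_{h(\lambda)}^n$, hence fixes $f$. Second, if $j \le h(\lambda)$, then $U_{ij}(c)$ already lies in $\unipotent_{h(\lambda)} \subseteq \unipotent_{m_2} \subseteq \unipotent_{m_1}$, and its action on $f$ is the same whether computed inside $\freeinvar_{n,m_1}$ or $\freeinvar_{n,m_2}$ (both reduce to the action on $\freeinvar_{n,h(\lambda)}$).

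Combining these, $f$ is fixed by $\unipotent_{m_i}$ if and only if it is fixed by every generator $U_{ij}(c)$, which — in view of the first observation — is equivalent to being fixed by those $U_{ij}(c)$ with $j \le h(\lambda)$, i.e.\ by $\unipotent_{h(\lambda)}$; this latter condition makes no reference to $m_i$. Together with the torus computation, this yields the equivalence.

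The argument is essentially bookkeeping; the only point requiring care is the verification that the action of the "extra" root subgroups (those involving indices beyond $h(\lambda)$) is indeed trivial on $f$, which rests on the correct form of the $\gl_m$-action $g \cdot t(w) = t(g \cdot w)$ already unwound in the proof of Proposition~\ref{prop:gram}. I expect no substantive obstacle beyond keeping track of which variables and which block of the matrix are involved.
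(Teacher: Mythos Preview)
Your argument is correct and is precisely the detailed unwinding of what the paper means by ``follows directly from the rule giving the action of the unipotent subgroup'': the paper's proof is a one-line appeal to the explicit action, and you have spelled out exactly that computation (reduction to root subgroups, triviality of those with $j>h(\lambda)$, and identification of those with $j\le h(\lambda)$ with $\unipotent_{h(\lambda)}$). There is no substantive difference in approach.
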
 

\begin{proof} This is well known, and follows directly from the rule giving the action of the unipotent subgroup $\unipotent_{m_1}$ (resp. $\unipotent_{m_2}$) on $\freeinvar_{n,m_1}$ 
(resp. $\freeinvar_{n,m_2}$).
\end{proof} 

\begin{corollary}\label{cor:stable} 
Let $\lambda$ be a  partition. Then  we have 
$$\mult_{n,m}(\lambda)=\begin{cases}\mult_{n,h(\lambda)}(\lambda)\quad&\mbox{ if }m\geq h(\lambda);\\
0&\mbox{ if } m<h(\lambda).
\end{cases}$$
\end{corollary}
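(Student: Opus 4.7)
The case $m < h(\lambda)$ is the convention from Definition~\ref{def:mult}, so I focus on $m \geq h(\lambda)$. The plan is to identify $\mult_{n,m}(\lambda)$ with the dimension of the space of $\unipotent_m$-fixed vectors of weight $\lambda$ in the quotient module $\ker(\varphi_{n,m}^n)/(\freeinvar_{n,m}^n)^{(+)}\ker(\varphi_{n,m}^n)$, and then set up a $\mc$-linear bijection between this space and its counterpart when $m$ is replaced by $h(\lambda)$.

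First I would observe that every multihomogeneous element of multidegree $\lambda$ in $\freeinvar_{n,m}^n$ automatically lies in the subalgebra $\freeinvar_{n,h(\lambda)}^n$. Indeed, the multidegree of a monomial $t(w_1)\cdots t(w_k)$ is the sum of the multidegrees of the factors $w_i$, all of which are componentwise non-negative; if the sum vanishes in a coordinate past the $h(\lambda)$-th, then each summand does, forcing each $w_i$ to involve only the variables $x_1,\ldots,x_{h(\lambda)}$. Consequently the multidegree-$\lambda$ component of $\ker(\varphi_{n,m}^n)$ coincides with that of $\ker(\varphi_{n,h(\lambda)}^n)$.

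Next I would verify that the multidegree-$\lambda$ components of the ideals $(\freeinvar_{n,m}^n)^{(+)}\ker(\varphi_{n,m}^n)$ and $(\freeinvar_{n,h(\lambda)}^n)^{(+)}\ker(\varphi_{n,h(\lambda)}^n)$ likewise agree. Any multidegree-$\lambda$ element of the former ideal is a sum of products $a\cdot b$ with $a\in (\freeinvar_{n,m}^n)^{(+)}$ and $b\in\ker(\varphi_{n,m}^n)$ of multidegrees $\mu$ and $\lambda-\mu$ respectively for some $0\leq\mu\leq\lambda$ (componentwise). Both $\mu$ and $\lambda-\mu$ then have at most $h(\lambda)$ non-zero coordinates, so the preceding step places both factors in $\freeinvar_{n,h(\lambda)}^n$, and the product belongs to $(\freeinvar_{n,h(\lambda)}^n)^{(+)}\ker(\varphi_{n,h(\lambda)}^n)$; the reverse inclusion is immediate.

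Combining these two observations, the multidegree-$\lambda$ components of the two quotient modules $\ker(\varphi_{n,m}^n)/(\freeinvar_{n,m}^n)^{(+)}\ker(\varphi_{n,m}^n)$ and $\ker(\varphi_{n,h(\lambda)}^n)/(\freeinvar_{n,h(\lambda)}^n)^{(+)}\ker(\varphi_{n,h(\lambda)}^n)$ coincide. Lemma~\ref{lemma:m1m2} then guarantees that a representative of such a class is $\unipotent_m$-fixed if and only if it is $\unipotent_{h(\lambda)}$-fixed, so the highest-weight-$\lambda$ spaces in the two quotients have the same dimension, yielding $\mult_{n,m}(\lambda)=\mult_{n,h(\lambda)}(\lambda)$. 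The main technical point is the compatibility statement for the ideal $(\freeinvar_{n,m}^n)^{(+)}\ker(\varphi_{n,m}^n)$ in the second step; it rests entirely on the non-negativity of the multidegrees of the polarized power sums $[w]$.
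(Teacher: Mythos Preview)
Your proof is correct and follows essentially the same approach as the paper: both arguments rest on the observation that multihomogeneous elements of multidegree $\lambda$ already lie in $\freeinvar_{n,h(\lambda)}^n$, together with Lemma~\ref{lemma:m1m2}, and the key technical point is exactly the compatibility of the ideals $(\freeinvar_{n,m}^n)^{(+)}\ker(\varphi_{n,m}^n)$ in multidegree $\lambda$, which the paper only alludes to (``taking into account the $\mz^m$-grading, one can easily show\ldots'') while you spell it out. Your framing---directly identifying the weight-$\lambda$ parts of the two quotient modules---is slightly more direct than the paper's two-inequality argument via $\gl_{h(\lambda)}$-complements, but the content is the same; the only mild imprecision is in the final step, where Lemma~\ref{lemma:m1m2} is literally stated for $\freeinvar$ rather than for the quotient, but the same reasoning (the additional root vectors $E_{ij}$ with $j>h(\lambda)$ annihilate any weight-$\lambda$ vector in a polynomial representation) applies verbatim to the quotient.
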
 

\begin{proof} The case $m<h(\lambda)$ is trivial by definition of $\mult_{n,m}(\lambda)$. Suppose 
$m>h(\lambda)$. Denote by $K_{n,m}$ the kernel of $\varphi:\freeinvar_{n,m}^n\to R_{n,m}$, and write 
$K_{n,h(\lambda)}:=K_{n,m}\cap\freeinvar_{n,h(\lambda)}^n$ for the kernel of the restriction of $\varphi$ to 
$\freeinvar_{n,h(\lambda)}^n$. 
Let $N$ be a  $\gl_{h(\lambda)}$-module complement of 
$(\freeinvar_{n,h(\lambda)}^n)^{(+)}K_{n,h(\lambda)}$ in $K_{n,h(\lambda)}$. Decompose $N$ as a direct sum 
$\bigoplus N_i$ of irreducible $\gl_{h(\lambda)}$-modules, and take a highest weight vector $f_i$ in $N_i$ for all $i$. Then by Lemma~\ref{lemma:m1m2}, the $f_i$ are highest weight vectors for the action of $\gl_m$ on $\freeinvar_{n,m}^n$. Moreover, taking into account the $\mz^m$-grading, one can easily show that $N\cap(\freeinvar_{n,m}^n)^{(+)}K_{n,m}=0$, hence the $f_i$ are linearly independent 
modulo $(\freeinvar_{n,m}^n)^{(+)}K_{n,m}$. This shows the inequality 
$\mult_{n,h(\lambda)}(\lambda)\leq\mult_{n,m}(\lambda)$. The proof of the reverse inequality is similar. 
\end{proof}

To simplify notation, write 
$$\mult_n(\lambda):=\mult_{n,h(\lambda)}(\lambda).$$ 
The numbers $\mult_n(\lambda)$ (all but finitely many of them are zero) carry all the sensible numerical information on a minimal generating system of the kernel of $\varphi:\freeinvar_{n,m}^n\to R_{n,m}$ by 
Corollary~\ref{cor:stable}. 
Moreover, setting  
$$h(\ker(\varphi)):=\max\{h(\lambda)\mid \mult_n(\lambda)\neq 0\}$$ 
we have the following: 

\begin{corollary}\label{cor:h(kerfi)} 
The kernel of $\varphi:\freeinvar_{n,m}^n\to R_{n,m}$ is generated as a $\gl_m$-stable ideal by its intersection with $\freeinvar_{n,h(\ker(\varphi))}^n$. 
\end{corollary}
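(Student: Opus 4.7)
The plan is to exhibit a minimal generating system of $\ker(\varphi)$ consisting of highest weight vectors that all happen to lie in the smaller subalgebra $\freeinvar_{n,h(\ker(\varphi))}^n$, and then invoke the fact that a highest weight vector of weight $\lambda$ generates the entire irreducible $\gl_m$-submodule isomorphic to $V_\lambda$.

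First, as explained right before Definition~\ref{def:mult}, we may choose a $\gl_m$-module direct complement $N$ of $\freeinvar^{(+)}\ker(\varphi)$ in $\ker(\varphi)$, so that a homogeneous $\mc$-basis of $N$ is a minimal generating system of the ideal $\ker(\varphi)$. Decompose $N$ into a direct sum $N=\bigoplus_i N_i$ of irreducible $\gl_m$-submodules, with $N_i\cong V_{\lambda^{(i)}}$ for some $\lambda^{(i)}\in\partitions_m$. By the definition of $\mult_{n,m}(\lambda)$ and Corollary~\ref{cor:stable}, each partition $\lambda^{(i)}$ that occurs satisfies $\mult_n(\lambda^{(i)})\neq 0$, hence $h(\lambda^{(i)})\leq h(\ker(\varphi))$.

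Next, for each $i$ pick a highest weight vector $f_i\in N_i$ of weight $\lambda^{(i)}$. As noted right before Lemma~\ref{lemma:m1m2}, any highest weight vector of weight $\lambda^{(i)}$ is multihomogeneous of multidegree $\lambda^{(i)}$ and therefore lies in $\freeinvar_{n,h(\lambda^{(i)})}^n\subseteq \freeinvar_{n,h(\ker(\varphi))}^n$. In particular, $f_i\in\ker(\varphi)\cap\freeinvar_{n,h(\ker(\varphi))}^n$. Since $N_i$ is irreducible and $f_i$ is a non-zero highest weight vector in $N_i$, the $\gl_m$-submodule generated by $f_i$ is the whole of $N_i$. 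Consequently, the $\gl_m$-submodule generated by $\{f_i\}_i$ contains $N=\bigoplus_i N_i$, and in particular contains a minimal generating system of the ideal $\ker(\varphi)$.

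Putting this together, the $\gl_m$-stable ideal of $\freeinvar_{n,m}^n$ generated by $\ker(\varphi)\cap \freeinvar_{n,h(\ker(\varphi))}^n$ contains every $f_i$, hence contains the $\gl_m$-submodule they generate, hence contains $N$, and therefore equals $\ker(\varphi)$. The reverse inclusion is clear, which completes the argument. The only non-routine point to keep in mind is that the specific choice of complement $N$ being $\gl_m$-stable (rather than an arbitrary vector space complement) is what permits picking highest weight vectors and exploiting multihomogeneity to push the generators into $\freeinvar_{n,h(\ker(\varphi))}^n$; everything else is a formal consequence of complete reducibility of polynomial $\gl_m$-modules.
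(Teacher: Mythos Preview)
Your proof is correct and follows essentially the same approach as the paper's own argument: take a $\gl_m$-stable complement of $\freeinvar^{(+)}\ker(\varphi)$ in $\ker(\varphi)$, decompose it into irreducibles, and observe that the highest weight vectors all lie in $\freeinvar_{n,h(\ker(\varphi))}^n$ by multihomogeneity. Your write-up is somewhat more explicit (invoking Corollary~\ref{cor:stable} and the remark before Lemma~\ref{lemma:m1m2} to justify $h(\lambda^{(i)})\leq h(\ker(\varphi))$ and $f_i\in\freeinvar_{n,h(\lambda^{(i)})}^n$), but the underlying argument is the same.
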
 

\begin{proof} Let $\oplus_i M_i$ be a $\gl_m$-module direct complement of 
$\freeinvar^{(+)}\ker(\varphi)$ in $\ker(\varphi)$, where the $M_i$ are irreducible $\gl_m$-modules. 
Let $f_i$ be a highest weight vector in $M_i$. Then all the $f_i$ belong to $\freeinvar_{n,h(\ker(\varphi))}^n$, and the $\gl_m$-module  $\oplus_i M_i$ generated by the set 
$\{f_i\}$ generates $\ker(\varphi)$ as an ideal. 
\end{proof} 

%%%%%%%%%%%%%%%%%%%%%%%%%%%%%%%%

\section{Main results}\label{sec:main} 

First specialize to $n=3$. 
The fundamental elements  $\Psi (w_1, w_2, w_3, w_4):=\Psi_4(w_1,w_2,w_3,w_4)$ of 
$\ker(\varphi_{3,m})$ defined in (\ref{eq:fundrel}) (here $w_1,w_2,w_3,w_4\in\monoms_m$) 
take the form 
\begin{align*}\Psi (w_1, w_2, w_3, w_4) &=-6t(w_1w_2w_3w_4) \\
&+2 t(w_1w_2w_3)t(w_4)+2t(w_1w_2w_4)t(w_3)+2t(w_1w_3w_4)t(w_2)+2t(w_2w_3w_4)t(w_1) \\
&+t(w_1w_2)t(w_3w_4)+t(w_1w_3)t(w_2w_4)+t(w_1w_4)t(w_2w_3)\\
&-t(w_1w_2)t(w_3)t(w_4)-t(w_1w_3)t(w_2)t(w_4)-t(w_1w_4)t(w_2)t(w_3)\\
&-t(w_2w_3)t(w_1)t(w_4)-t(w_2w_4)t(w_1)t(w_3)-t(w_3w_4)t(w_1)t(w_2)\\
&+t(w_1)t(w_2)t(w_3)t(w_4).
\end{align*}

Next we define an element $\relone_{3,2}$ and an element $\reltwo_{4,2}$ in $\freeinvar_{3,2}$; 
they have multidegree $(3,2)$, and $(4,2)$, respectively. 
To simplify notation, we write $x,y$ instead of $x_1,x_2$, so $\monoms_2$ consists of monomials in the commuting indeterminates $x,y$. 
 $$\relone_{3,2}:= \frac 12\bigl(3\Psi(xy,x,x,y)-3\Psi(x,x,x,y^2)
+\Psi(x,x,x,y)t(y)-\Psi(x,x,y,y)t(x)\bigr)$$
\begin{align*}\reltwo_{4,2}&:=3\Psi(xy,xy,x,x)-3\Psi(x,x,x,xy^2)
+2\Psi(x,x,x,y)t(xy)
\\&-\Psi(x,x,y,y)t(x^2)-\Psi(x,x,x,y^2)t(x)
\end{align*}
The elements $\relone_{3,2}$ and $\reltwo_{4,2}$ are both contained in 
$\freeinvar_{3,2}^3$ (i.e. they do not involve variables $t(w)$ with $\deg(w)>3$). 
Indeed, direct calculation shows that 
\begin{align*}
\relone_{3,2}&=6t(x^2y)t(xy)-3t(xy^2)t(x^2)-2t(x^2y)t(x)t(y)
\\&+t(xy^2)t(x)^2-4t(xy)^2t(x)+2t(xy)t(x)^2t(y)
-3t(x^3)t(y^2)\\&+4t(x^2)t(x)t(y^2)-t(x)^3t(y^2)
+t(x^3)t(y)^2-t(x^2)t(x)t(y)^2
\end{align*}
and 
\begin{align*}\reltwo_{4,2}&=6t(x^2y)^2+t(xy)^2t(x^2)-3t(xy)^2t(x)^2-6t(x^3)t(xy^2)
\\&+2t(x^2)t(xy^2)t(x)+4t(x^3)t(xy)t(y)
\\&-2t(x^2)t(xy)t(x)t(y)+2t(xy)t(x)^3t(y)-4t(x^2y)t(x^2)t(y)
\\&-t(x^2)^2t(y^2)+t(x^2)^2t(y)^2+4t(x^2)t(x)^2t(y^2)
\\&-t(x^2)t(x)^2t(y)^2-t(x)^4t(y^2)-2t(x^3)t(x)t(y^2)
\end{align*}

Finally, denote by $\relthree_{2,2,2}$ the element for $n=3$ defined by (\ref{eq:relthree}) in general. 
Clearly, $\relthree_{2,2,2}$ belongs to $\freeinvar_{3,3}^2$ and has multidegree $(2,2,2)$. 

\begin{theorem}\label{thm:main} We have 
$$\mult_3(\lambda)=\begin{cases} 1 &\mbox{ for }\lambda=(3,2), \quad \lambda=(4,2),  \mbox{ and }\quad \lambda=(2,2,2);\\   
0 &\mbox{ for all other }\lambda. \end{cases} $$ 
For $m\geq 2$ the elements $\relone_{3,2}$, $\reltwo_{4,2}\in\freeinvar_{3,m}^3$   generate irreducible $\gl_m$-submodules $N_{(3,2)}^m\cong V_{(3,2)}$, $N_{(4,2)}^m\cong V_{(4,2)}$ in $\ker(\varphi)$, 
and for $m\geq 3$ the element $\relthree_{2,2,2}\in\freeinvar_{3,m}^3$ generates an irreducible $\gl_m$-submodule  $N_{(2,2,2)}^m\cong V_{(2,2,2)}$ in $\ker(\varphi)$. 
Furthermore, choose arbitrary $\mc$-bases $\basis_{(3,2)}^m$, $\basis_{(4,2)}^m$,  $\basis_{(2,2,2)}^m$  in 
$N_{(3,2)}^m$, $N_{(4,2)}^m$, $N_{(2,2,2)}^m$. Set 
$\basis^m:=\basis_{(3,2)}^m\cup\basis_{(4,2)}^m\cup\basis_{(2,2,2)}^m$ when $m\geq 3$ and 
$\basis^2:=\basis_{(3,2)}^2\cup\basis_{(4,2)}^2$. Then $\basis^m$  
is a minimal generating system of the kernel of the surjection $\varphi:\freeinvar_{3,m}^3\to R_{3,m}$ for any $m\geq 2$. 
\end{theorem}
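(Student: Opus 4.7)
The plan is to carry out the proof in four stages: (1) verify that $J_{3,2}$, $J_{4,2}$, and $J_{2,2,2}$ lie in $\ker(\varphi)$ and are $\gl_m$-highest weight vectors of the stated weights; (2) show each is nonzero modulo $\freeinvar^{(+)}\ker(\varphi)$, yielding the lower bound $\mult_3(\lambda)\geq 1$ for $\lambda\in\{(3,2),(4,2),(2,2,2)\}$; (3) prove the matching upper bound $\mult_3(\lambda)=1$ on these three partitions and vanishing on all others; (4) deduce the claimed description of $\basis^m$.

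For step (1), the element $J_{2,2,2}$ lies in $\ker(\varphi)$ by Proposition~\ref{prop:gram}, and the semi-invariance $g\cdot J_{2,2,2}=\det^2(g)J_{2,2,2}$ forces it to be a $\unipotent_3$-fixed weight-$(2,2,2)$ vector. The elements $J_{3,2}$ and $J_{4,2}$ are defined as $\freeinvar_{3,m}$-linear combinations of $\Psi_4(w_1,w_2,w_3,w_4)$ and hence belong to $\ker(\varphi_{3,m})$; the expanded forms displayed in the excerpt show that every term $t(w)$ with $\deg(w)=4$ cancels, placing them in $\freeinvar_{3,m}^3\cap\ker(\varphi_{3,m})=\ker(\varphi)$. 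Both elements live in the subalgebra $\freeinvar_{3,2}^3$, so by Lemma~\ref{lemma:m1m2} the highest-weight property need only be checked for $\gl_2$: one applies the single raising operator $E_{12}$ (which acts on $t(w)$ as $t(x\,\partial w/\partial y)$) to each expanded polynomial and verifies the result vanishes.

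For step (2), any factorization $J_{3,2}=\sum g_ih_i$ with $g_i\in\freeinvar^{(+)}$ and $h_i\in\ker(\varphi)$ forces each $h_i$ to be a relation of total degree at most $4$ whose multidegree is componentwise bounded by $(3,2)$; a direct injectivity check of $\varphi$ in each of the finitely many such multidegrees rules this out, whence $\mult_3((3,2))\geq 1$. The same strategy applies in degree $6$ for $J_{4,2}$ and $J_{2,2,2}$: once one knows that the only degree-$5$ elements of $\ker(\varphi)$ form the $\gl_m$-orbit of $J_{3,2}$ (an irreducible $V_{(3,2)}$, confined to $\freeinvar_{3,2}$), a finite multidegree check shows that neither $J_{4,2}$ nor $J_{2,2,2}$ can be written as $t(x_j)$ times a polarization of $J_{3,2}$.

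The main obstacle is step (3), the vanishing of $\mult_3(\lambda)$ for all other $\lambda$. By Corollary~\ref{cor:stable} it suffices to work with $m=h(\lambda)$, and once combined with Corollary~\ref{cor:h(kerfi)} the task reduces to establishing a bound $h(\ker(\varphi))\leq 3$ and then handling only $m\in\{2,3\}$. My preferred approach starts from the redundant finite presentation of Proposition~\ref{prop:general} (valid up to degree $n^2-n+2=8$): by iteratively eliminating each variable $t(w)$ with $\deg(w)\in\{4,5,\ldots,8\}$ using the $\Psi_4$-relations themselves, one arrives at an ideal generating set of $\ker(\varphi)\cap\freeinvar_{3,m}^8$ inside $\freeinvar_{3,m}^3$, whose irreducible $\gl_m$-summands can be enumerated; after reducing modulo the ideal generated by the three highest weight vectors already produced, one verifies that no further summands survive. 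An alternative is to compute the multigraded $\gl_m$-character of $\ker(\varphi)$ from the short exact sequence $0\to\ker(\varphi)\to\freeinvar_{3,m}^3\to R_{3,m}\to 0$, using a plethystic formula for $\mathrm{ch}(R_{3,m})$ and the factorization $\freeinvar_{3,m}^3=S(V_{(1)})\otimes S(V_{(2)})\otimes S(V_{(3)})$, and then to extract $\mult_3$ by subtracting the character of $\freeinvar^{(+)}\ker(\varphi)$. Step (4) is then immediate from the characterization recalled in Section~\ref{sec:prel}: once $\mult_3$ is pinned down, any $\mc$-basis of a $\gl_m$-module direct complement of $\freeinvar^{(+)}\ker(\varphi)$ in $\ker(\varphi)$ is a minimal generating set, and $\basis^m$ is by construction precisely such a basis.
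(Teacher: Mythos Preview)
Your steps (1), (2), and (4) are essentially right and close to the paper, though the paper establishes the highest-weight property for $J_{3,2}$ and $J_{4,2}$ indirectly: it first computes the full $\gl_m$-module structure of $K_{3,m}^{(5)}$ and $K_{3,m}^{(6)}$ from the characters of $\freeinvar_{3,m}^3$ and $R_{3,m}$ (Cauchy, Jacobi--Trudi, plethysm), and then observes that the multihomogeneous components of multidegree $(3,2)$ and $(4,2)$ are so small that any nonzero element is automatically a highest weight vector. For step (2) the paper uses the same degree considerations you do, plus the presence of monomials such as $6t(x^2y)^2$ in $J_{4,2}$ and $3t(x_1x_2)t(x_1x_3)t(x_2x_3)$ in $J_{2,2,2}$ that visibly cannot lie in $\sum_j t(x_j)K_{3,m}^{(5)}$.

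The gap is in step (3). First, your reduction to $m\in\{2,3\}$ presupposes $h(\ker(\varphi))\leq 3$, which you do not justify; this bound is equivalent to $\mult_3(\lambda)=0$ for all $\lambda$ with $h(\lambda)\geq 4$, i.e.\ to part of the statement being proved. The paper instead establishes the weaker a priori bound $h(\ker(\varphi))\leq 4$: by Proposition~\ref{prop:general} the ideal is generated in degree $\leq 8$, and any $V_\lambda$ occurring in $(\freeinvar_{n,m}^n)^{(d)}$ satisfies $h(\lambda)\leq (d+1)/2$ by a short Pieri/Littlewood--Richardson argument, whence $h(\lambda)\leq 4$. This reduces the problem to $m=4$, not $m=3$. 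Second, your character alternative does not close: the short exact sequence yields the character of $\ker(\varphi)$, but the character of $\freeinvar^{(+)}\ker(\varphi)$ cannot be read off without already knowing a generating set for $\ker(\varphi)$ and the syzygies among its generators, which is exactly what you are after. Your elimination approach is closer in spirit but leaves ``no further summands survive'' as an unproved assertion; that assertion \emph{is} the upper bound.

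The paper supplies the missing mechanism via a Hironaka decomposition. Since $R_{3,m}$ is Cohen--Macaulay and $\primary=\{[x_j^k]:1\leq j\leq m,\ 1\leq k\leq 3\}$ is a homogeneous system of parameters, $R_{3,m}$ is free over $\mc[\primary]$, and the multigraded Hilbert series (computed from the $S_3$-equivariant Hilbert series of $\mc[V]$) determines how many secondary generators occur in each multidegree. For $m=2,3,4$ one exhibits an explicit candidate set $\trysecondary$ of secondaries up to degree $8$ and, for each product $s\cdot q$ with $s\in\trysecondary$ and $q\in\mingen\setminus\primary$ of degree $\leq 8$, a congruence $s\cdot q\equiv(\text{combination of }\trysecondary)\pmod{(\primary)}$, each witnessed by an explicit element $r_s^q\in K_{3,m}$ built from the $\Psi_4$-relations. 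Two straightforward lemmas then show that the collection of $r_s^q$ (together with their $S_m$-translates) generates $K_{3,m}$; one simply counts them: for $m=4$ there are $196=d_{(3,2)}(4)+d_{(4,2)}(4)+d_{(2,2,2)}(4)$ of them. Since $\basis^4$ extends to a minimal generating system and already has cardinality $196$, it is one.
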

\goodbreak 

In classical language (see for example \cite{weyl}), the elements in the $\gl_m$-module generated by $f\in\ker(\varphi)$ are called the {\it polarizations of} $f$.  So Theorem~\ref{thm:main} can be paraphrased 
as follows: the polarizations of $\relone_{3,2}$, $\reltwo_{4,2}$, $\relthree_{2,2,2}$ minimally generate the ideal of relations among the polarized power sums of degree at most three in dimension  three.   

For an arbitrary $n$ we show that the ideal of relations between the polarized power sums can not be generated in degree strictly less than $2n$:  

\begin{theorem}\label{thm:lowerbound} 
Suppose $m\geq n$. The element $\relone\in\ker(\varphi_{n,m}^n)$ given in (\ref{eq:relthree}) does not belong to 
the ideal $(\freeinvar_{n,m}^n)^{(+)}\cdot\ker(\varphi_{n,m}^n)$.  
In particular, denoting  by $2^n:=(2,\ldots,2)\in\partitions_n$ the partition with $n$ non-zero parts, all equal to $2$, we have $\mult_n(2^n)=1$. 
\end{theorem}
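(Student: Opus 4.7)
The plan is to exploit the Second Fundamental Theorem for the full orthogonal group $\mathrm{O}_n$. Since $\relone$ is multihomogeneous of multidegree $(2,\dots,2,0,\dots,0)\in\mz^m$, any multihomogeneous expression $\relone=\sum_ih_iK_i$ involves only variables $t(w)$ with $w\in\mc[x_1,\dots,x_n]$, so we reduce at once to working in $\freeinvar_{n,n}^n$.

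I set up the orthogonal invariant model: let $T=\mc[s_{ij}:1\le i\le j\le n+1]$ be the polynomial algebra on the entries of a symmetric $(n+1)\times(n+1)$ matrix $S$, and let $Y$ be an $n\times(n+1)$ matrix of indeterminates carrying the natural action of $\mathrm{O}_n$ by left multiplication. By the First and Second Fundamental Theorems for $\mathrm{O}_n$, the map $\tilde\varphi:T\to\mc[Y]^{\mathrm{O}_n}$, $s_{ij}\mapsto\sum_k y_{ki}y_{kj}$, is surjective with kernel the principal ideal $(\det S)$, whose generator has degree $n+1$ in the $s_{ij}$. I then identify the subring $\mathcal{F}_{\le 2}\subset\freeinvar_{n,n}^n$ generated by the $t(x_i)$ and $t(x_ix_j)$ with $T/(s_{n+1,n+1}-n)$ via $t(x_ix_j)\leftrightarrow\bar{s}_{ij}$, $t(x_i)\leftrightarrow\bar{s}_{i,n+1}$; under this isomorphism $\relone$ corresponds to $\overline{\det S}$. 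Specializing $y_{n+1}\mapsto\mathbf{1}=(1,\dots,1)^T$ realizes the image $R_{\le 2}:=\mathrm{im}(\varphi|_{\mathcal{F}_{\le 2}})$ as $\mc[V^n]^{\mathrm{O}_{n-1}}$, where $\mathrm{O}_{n-1}=\mathrm{stab}_{\mathrm{O}_n}(\mathbf{1})$ contains $S_n$. A Krull-dimension count, combined with the domain property and the irreducibility of $\det S$, shows that $\ker(\varphi|_{\mathcal{F}_{\le 2}})=(\relone)$ is a principal ideal generated by a degree-$2n$ element. An immediate degree comparison then yields $\relone\notin\mathcal{F}_{\le 2}^{(+)}\cdot(\relone)$.

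It remains to lift this noncontainment to $\freeinvar_{n,n}^n$. Suppose, toward contradiction, $\relone=\sum_ih_iK_i$ with $h_i\in\freeinvar^{(+)}$, $K_i\in\ker(\varphi)$, all multihomogeneous. Apply the $\mc$-algebra projection $\pi:\freeinvar_{n,n}^n\to\mathcal{F}_{\le 2}$ killing every $t(w)$ with $\deg w\ge 3$: this gives $\relone=\sum_i\pi(h_i)\pi(K_i)$ with each $\pi(h_i)\in\mathcal{F}_{\le 2}^{(+)}$. The subtle point is that $\pi(K_i)$ need not lie in $\ker(\varphi|_{\mathcal{F}_{\le 2}})=(\relone)$. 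To handle this, I use the fundamental relations $\Psi_{n+1}(w_1,\dots,w_{n+1})$ of Proposition~\ref{prop:general} to eliminate, iteratively and modulo $\freeinvar^{(+)}\ker(\varphi)$, each occurrence of a variable $t(w)$ with $\deg w\ge 3$ inside the $K_i$, replacing it by a polynomial in $\mathcal{F}_{\le 2}$-variables and absorbing the corrections into the $\sum_ih_iK_i$. The process produces, after finitely many steps, a manifest expression placing $\relone$ inside $\mathcal{F}_{\le 2}^{(+)}\cdot(\relone)$, contradicting the previous paragraph.

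The main obstacle is this last elimination: the naive projection $\pi$ is not compatible with $\ker(\varphi)$, and one must orchestrate the substitutions coming from Proposition~\ref{prop:general} so that at every stage the rewritten expression stays in $\freeinvar^{(+)}\ker(\varphi)$ while strictly reducing the maximal $\deg(w)$ among the $t(w)$ that still appear. Once this is done, the $\mult_n(2^n)=1$ conclusion follows formally, since Proposition~\ref{prop:gram} guarantees that $\relone$ spans a copy of $V_{2^n}$, and a parallel ``upper bound'' argument via the isotypic decomposition shows no further copy can arise.
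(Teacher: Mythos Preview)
Your orthogonal-group setup is correct and matches the paper's Proposition~\ref{prop:orthogonal}: $\ker(\varphi\vert_{\mathcal{F}_{\le 2}})$ is indeed principal, generated by $J$, so $J\notin\mathcal{F}_{\le 2}^{(+)}\cdot(J)$ follows by degree. You also correctly identify the real obstacle: after applying the algebra retraction $\pi$ killing $t(w)$ with $\deg w\ge 3$, the images $\pi(K_i)$ need not lie in $\ker(\varphi\vert_{\mathcal{F}_{\le 2}})$.

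The gap is in your proposed fix. The fundamental relations $\Psi_{n+1}(w_1,\dots,w_{n+1})$ contain a unique top term $t(w_1\cdots w_{n+1})$, and this term has $\deg(w_1\cdots w_{n+1})\ge n+1$. Thus $\Psi_{n+1}$ lets you eliminate $t(w)$ only when $\deg w\ge n+1$; for $3\le\deg w\le n$ the generator $t(w)$ is part of the \emph{minimal} generating set of $R_{n,m}$ and cannot be rewritten in terms of lower-degree $t$'s by any element of $\ker(\varphi)$. Concretely, a $K_i$ of multidegree $\mu_i\subsetneq 2^n$ may well involve, say, $t(x_1x_2\cdots x_k)$ with $3\le k\le n$, and there is no relation that trades this for a polynomial in $\mathcal{F}_{\le 2}$. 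Your iterative elimination therefore cannot get started, and the argument does not close.

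The paper handles exactly this point with a representation-theoretic substitute for elimination. By Pieri's rule, the ideal $A$ generated by $\{t(w):\deg w\ge 3\}$ is contained in $\sum_{(3)\subset\lambda}\lambda(\freeinvar)$; since $(3)\not\subset 2^n$, the entire $2^n$-isotypic component of $\freeinvar$ already lies in $\mathcal{F}_{\le 2}$. More to the point, if $J=\sum_i h_iK_i$ with $h_i\in\freeinvar^{(+)}$, decompose each $K_i$ into its $\gl_m$-isotypic pieces: only components $\lambda(K_i)$ with $\lambda\subsetneq 2^n$ can contribute to the $2^n$-part of the product (again by Pieri), and every such $\lambda$ has all parts $\le 2$, so $\lambda(K_i)\in\mathcal{F}_{\le 2}\cap\ker(\varphi)=\ker(\varphi\vert_{\mathcal{F}_{\le 2}})$. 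Now the retraction $\pi$ is harmless, and one lands in $\mathcal{F}_{\le 2}^{(+)}\cdot\ker(\varphi\vert_{\mathcal{F}_{\le 2}})$. In short, the correct tool is the isotypic filtration, not syzygy-based elimination; once you have that, your outline for $\mult_n(2^n)=1$ goes through.
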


Denote by $\beta(n,m)$ the largest degree of an element in a minimal generating system of the ideal $\ker(\varphi_{n,m}^n)$ of relations between the polarized power sums. 
We summarize our present knowledge of $\beta(n,m)$.  
The general upper bound 
$$\beta(n,m)\leq n^2-n+2$$ 
is pointed out in \cite{domokos}. 
By Theorem~\ref{thm:lowerbound} we have the general lower bound 
$$\beta(n,m)\geq 2n \mbox{ for }m\geq n.$$  
We have $\beta(2,m)=4$ for all $m\geq 2$ (see for example \cite{domokos}), and by 
Theorem~\ref{thm:main} we have 
$$\beta(3,m)=6  \mbox{ for all }m\geq 2.$$  
Note that both for $n=2$ and $n=3$ the exact value of $\beta(n,m)$ agrees with the general lower bound $2n$ established here. 
Moreover, both for $n=2$ and $n=3$ we have the equality $h(\ker(\varphi^n_{n,m}))=n$ when 
$m\geq n$. 

%%%%%%%%%%%%%%%%%%%%%%%%%%%%

\section{Reduction to $m=4$} \label{sec:reductionm=4}

\begin{proposition}\label{prop:d/2} 
If $V_{\lambda}$ occurs as an irreducible $\gl_m$-module summand in the degree $d$ homogeneous component of 
$\freeinvar_{n,m}^n$, then $h(\lambda)\leq \frac{d+1}{2}$. 
\end{proposition}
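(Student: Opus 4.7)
The plan is to use the $\gl_m$-module decomposition $W_{n,m}\cong V_{(1)}\oplus V_{(2)}\oplus\cdots\oplus V_{(n)}$ recalled in Section~\ref{sec:prel} to write
$$\freeinvar_{n,m}^n = S(W_{n,m})\cong \bigotimes_{k=1}^n S(V_{(k)}) = \bigoplus_{(a_1,\ldots,a_n)\in\mn^n}\bigotimes_{k=1}^n S^{a_k}(V_{(k)}).$$
Since the copy of $V_{(k)}$ inside $W_{n,m}$ consists of elements of degree $k$ in the grading on $\freeinvar_{n,m}^n$ inherited from $R_{n,m}$, the summand $\bigotimes_k S^{a_k}(V_{(k)})$ lies entirely in degree $\sum_k k a_k$. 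So I only need to bound $h(\lambda)$ for irreducible summands $V_\lambda\subseteq\bigotimes_k S^{a_k}(V_{(k)})$ with $\sum_k k a_k=d$.

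The key tool is a standard height-subadditivity fact: if $V_\lambda$ is a $\gl_m$-module summand of $V_\mu\otimes V_\nu$, then $h(\lambda)\leq h(\mu)+h(\nu)$. I would derive this from the Littlewood-Richardson rule, since $\mu\subseteq\lambda$ and the first column of the skew shape $\lambda/\mu$ has $h(\lambda)-h(\mu)$ boxes which must carry strictly increasing entries from $\{1,\ldots,h(\nu)\}$; induction yields the analogous statement for an arbitrary iterated tensor product. Applied to $V_{(k)}^{\otimes a_k}$ and using $h(V_{(k)})=1$, every irreducible summand of $V_{(k)}^{\otimes a_k}$ has height at most $a_k$, and the same therefore holds for summands of its $\gl_m$-submodule $S^{a_k}(V_{(k)})$. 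For $k=1$ I exploit the stronger observation that $S^{a_1}(V_{(1)})=V_{(a_1)}$ is irreducible of height $\min(a_1,1)$. Combining these, any summand $V_\lambda\subseteq\bigotimes_k S^{a_k}(V_{(k)})$ satisfies
$$h(\lambda)\leq \min(a_1,1)+\sum_{k=2}^n a_k.$$

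To conclude, I maximize the right-hand side under the constraint $\sum_{k=1}^n k a_k=d$. If $a_1=0$ the bound becomes $\sum_{k\geq 2}a_k\leq d/2$, because each $k\geq 2$. If $a_1\geq 1$ it becomes $1+\sum_{k\geq 2}a_k\leq 1+(d-a_1)/2\leq 1+(d-1)/2=(d+1)/2$. Either way $h(\lambda)\leq (d+1)/2$, which is the desired inequality. The only real obstacle is bookkeeping; the conceptual point is that the sharper bound $\min(a_1,1)$ on the $V_{(1)}$ factor---rather than the naive $a_1$ coming from $h(V_{(1)})\cdot a_1$---is precisely what tightens the trivial estimate $h(\lambda)\leq d$ into the claimed $(d+1)/2$.
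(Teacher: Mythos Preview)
Your argument is correct and follows essentially the same route as the paper: the same decomposition of the degree $d$ component as $\bigoplus S^{d_1}(V_{(1)})\otimes\cdots\otimes S^{d_n}(V_{(n)})$, the same observation that $S^{d_1}(V_{(1)})\cong V_{(d_1)}$ contributes height at most $1$ while each $S^{d_i}(V_{(i)})\subset V_{(i)}^{\otimes d_i}$ for $i\geq 2$ contributes height at most $d_i$, and the same appeal to height subadditivity under tensor products via the Littlewood--Richardson rule. Your final optimization, splitting into the cases $a_1=0$ and $a_1\geq 1$, is in fact a bit more careful than the paper's one-line inequality $1+d_2+\cdots+d_n\leq (d+1)/2$.
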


\begin{proof} Denote by $S^k(M)$ the $k$th symmetric tensor power of the $\gl_m$-module $M$. 
The degree $d$ homogeneous component of $\freeinvar_{n,m}^n$ is isomorphic as a $\gl_m$-module to 
\begin{equation}\label{eq:symmetricpowers}
\bigoplus_{d_1+2d_2+3d_3+\cdots+nd_n=d}S^{d_1}(V_{(1)})\otimes S^{d_2}(V_{(2)})\otimes\cdots\otimes S^{d_n}(V_{(n)}).\end{equation}
Note that $S^{d_1}(V_{(1)})\cong V_{(d_1)}$, and for $i\geq 2$,  $S^{d_i}(V_{(i)})$ is a $\gl_m$-submodule of $V_{(i)}\otimes\cdots\otimes V_{(i)}$ ($d_i$ tensor factors), which involves only summands $V_{\lambda}$ with $h(\lambda)\leq d_i$ by Pieri's rule (I.5.16 in \cite{macdonald}). 
One concludes by the Littlewood-Richardson rule (I.9.2  in \cite{macdonald} that for the irreducible constituents $V_{\lambda}$ of (\ref{eq:symmetricpowers}) we have $h(\lambda)\leq 1+d_2+d_3+\cdots +d_n\leq \frac{d+1}{2}$  
(the latter inequality follows from $d= \sum_{i=1}^n id_i$).  
\end{proof} 

\begin{proposition}\label{prop:n^2-n+2/2} 
We have the inequality $h(\ker(\varphi))\leq (n^2-n+2)/2$. Consequently,  
the kernel of $\varphi:\freeinvar_{n,m}^n\to R_{n,m}$ is generated as a $\gl_m$-stable ideal by its intersection with $\freeinvar_{n,(n^2-n+2)/2}^n$. 
\end{proposition}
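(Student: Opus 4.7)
The plan is to combine Proposition~\ref{prop:d/2} with the general upper bound $\beta(n,m)\leq n^2-n+2$ (recalled in Section~\ref{sec:main} and attributed there to \cite{domokos}), and then to deduce the second assertion from Corollary~\ref{cor:h(kerfi)}.

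First I would fix a partition $\lambda$ with $\mult_n(\lambda)\neq 0$. Unpacking Definition~\ref{def:mult}, this means that if we choose a $\gl_m$-module direct complement of $\freeinvar^{(+)}\ker(\varphi)$ inside $\ker(\varphi)$ and decompose it into irreducible summands, at least one summand is isomorphic to $V_\lambda$. A highest weight vector of that summand is multihomogeneous of multidegree $\lambda$, hence homogeneous of degree $|\lambda|$, and can be included in a minimal generating system of $\ker(\varphi_{n,m}^n)$ (by taking a $\mc$-basis of the complement adapted to the isotypic decomposition). Consequently $|\lambda|\leq\beta(n,m)\leq n^2-n+2$.

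Next I would apply Proposition~\ref{prop:d/2} to the degree $|\lambda|$ homogeneous component of $\freeinvar_{n,m}^n$, obtaining $h(\lambda)\leq(|\lambda|+1)/2\leq(n^2-n+3)/2$. The only delicate point is closing the half-unit gap between this and the desired $(n^2-n+2)/2$: since $n(n-1)$ is a product of two consecutive integers it is even, so $n^2-n+3$ is odd and $(n^2-n+3)/2$ is a half-integer; as $h(\lambda)$ is an integer, the bound automatically tightens to $h(\lambda)\leq(n^2-n+2)/2$. Taking the maximum over all $\lambda$ with $\mult_n(\lambda)\neq 0$ yields $h(\ker(\varphi))\leq(n^2-n+2)/2$.

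The second assertion is then immediate from Corollary~\ref{cor:h(kerfi)}: the kernel is generated as a $\gl_m$-stable ideal by its intersection with $\freeinvar_{n,h(\ker(\varphi))}^n$, which by the bound just established is contained in $\freeinvar_{n,(n^2-n+2)/2}^n$. I do not anticipate any genuine obstacle here; the parity observation is the only step that is not entirely automatic from the preceding results, and it merely exploits the integrality of $h(\lambda)$ together with the evenness of $n(n-1)$.
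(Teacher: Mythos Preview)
Your proof is correct and follows essentially the same route as the paper: both combine the degree bound $n^2-n+2$ for generators of $\ker(\varphi)$ (the paper cites Proposition~\ref{prop:general} directly, you cite the equivalent statement $\beta(n,m)\leq n^2-n+2$ from Section~\ref{sec:main}) with Proposition~\ref{prop:d/2}, and then invoke Corollary~\ref{cor:h(kerfi)}. Your write-up is in fact more careful than the paper's, which silently passes from the bound $h(\lambda)\leq (d+1)/2$ with $d=n^2-n+2$ to $h(\lambda)\leq (n^2-n+2)/2$; you correctly identify and justify this half-unit improvement via the parity of $n(n-1)$ and the integrality of $h(\lambda)$.
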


\begin{proof} We know from Proposition~\ref{prop:general} that $\ker(\varphi)$ is generated as an ideal by the sum $M$ of its homogeneous components of degree $\leq n^2-n+2$. Decompose $M$ as the direct sum $\bigoplus_iM_i$ of irreducible $\gl_m$-modules. By Proposition~\ref{prop:d/2},  $M_i\cong V_{\lambda_i}$ for some $\lambda_i\in\partitions_m$ with $h(\lambda_i)\leq (n^2-n+2)/2$. 
Since $M$ contains a $\gl_m$-module direct complement of $(\freeinvar_{n,m}^n)^{(+)}\ker(\varphi)$  in  
the $\gl_m$-module $\ker(\varphi)$, it follows that  $\mult_{n,m}(\lambda)=0$ when  $h(\lambda)>\frac{n^2-n+2}{2}$. This shows the inequality $h(\ker(\varphi))\leq n^2-n+2$, implying by 
Corollary~\ref{cor:h(kerfi)} the second statement. 
\end{proof} 

For $n=3$ we have $\frac{n^2-n+2}{2}=4$, hence by Proposition~\ref{prop:n^2-n+2/2} 
it is sufficient to prove Theorem~\ref{thm:main} in the special case $m=4$. 

%%%%%%%%%%%%%%%%%%%%%%%%%%%%%%%%%%%

\section{Minimality}\label{sec:minimality}

Throughout this section we assume $n=3$. First we determine the $\gl_m$-module structure of the  
kernel $K_{3,m}$ of $\varphi:\freeinvar_{3,m}^3\to R_{3,m}$ up to degree $6$. Denote by 
$K_{3,m}^{(d)}$ the degree $d$ homogeneous component of $K_{3,m}$. Note that similarly to 
Corollary~\ref{cor:stable} one has that the multiplicity of $V_{\lambda}$ as a summand in the $\gl_{h(\lambda)}$-module $K_{3,h(\lambda)}$ is the same as the multiplicity of $V_{\lambda}$ in the 
$\gl_m$-module $K_{3,m}$ for an arbitrary $m\geq h(\lambda)$.

\begin{proposition}\label{prop:kernelglstructure}
We have $K_{3,m}^{(d)}=0$ for $d\leq 4$, and for $d=5,6$ the following $\gl_m$-module isomorphisms hold: 
$$K_{3,m}^{(5)}\cong V_{(3,2)} \quad \mbox{ for }m\geq 2;$$
$$K_{3,m}^{(6)}\cong 2\cdot V_{(4,2)}+V_{(3,3)}+V_{(3,2,1)}+V_{(2,2,2)}\quad \mbox{ for }m\geq 3.$$
\end{proposition}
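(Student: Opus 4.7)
The plan is to compute the $\gl_m$-equivariant decomposition of both $\freeinvar_{3,m}^3$ and $R_{3,m}$ in each total degree $d \le 6$ and read off $K_{3,m}^{(d)}$ as the difference. The analogue of Corollary~\ref{cor:stable} for the graded pieces (which follows by the same argument) shows that the multiplicity of $V_\lambda$ in either side stabilises as soon as $m \ge h(\lambda)$, so it suffices to work with $m$ large enough, and Proposition~\ref{prop:d/2} further restricts the partitions $\lambda$ in play in degree $d$ to those with $h(\lambda) \le (d+1)/2$. Hence for each $d \le 6$ only a handful of multiplicities need to be checked.

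For the ambient algebra, the identification $W_{3,m} \cong V_{(1)} \oplus V_{(2)} \oplus V_{(3)}$ and the decomposition (\ref{eq:symmetricpowers}) express the degree-$d$ part of $\freeinvar_{3,m}^3$ as a direct sum of tensor products of symmetric powers $S^{d_i}(V_{(i)})$ over triples $(d_1,d_2,d_3)$ with $d_1 + 2 d_2 + 3 d_3 = d$. The Schur character of $S^k(V_{(i)})$ is the plethysm $h_k[s_{(i)}]$; the small values needed are standard, for instance
\[
h_2[s_{(2)}] = s_{(4)} + s_{(2,2)}, \qquad h_3[s_{(2)}] = s_{(6)} + s_{(4,2)} + s_{(2,2,2)}, \qquad h_2[s_{(3)}] = s_{(6)} + s_{(4,2)}.
\]
Combined with the Pieri and Littlewood--Richardson rules (I.5.16 and I.9.2 of \cite{macdonald}), this yields the full irreducible $\gl_m$-decomposition of each degree component of $\freeinvar_{3,m}^3$ up to $d = 6$.

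For the invariant ring I would use Schur--Weyl/Cauchy duality. Writing $U_\lambda$ for the irreducible $\gl_3$-module with highest weight $\lambda$, the coordinate ring $\mc[V^m] \cong S(\mc^3 \otimes \mc^m)$ decomposes as a $\gl_3 \times \gl_m$-module as $\bigoplus_{h(\lambda) \le 3} U_\lambda \otimes V_\lambda$. Since the $S_3$-action on $V \cong \mc^3$ (via the permutation embedding $S_3 \hookrightarrow \gl_3$) commutes with the $\gl_m$-action, taking $S_3$-invariants yields
\[
R_{3,m} \cong \bigoplus_{h(\lambda) \le 3} (U_\lambda)^{S_3} \otimes V_\lambda,
\]
so the multiplicity of $V_\lambda$ in $R_{3,m}$ equals $\dim (U_\lambda)^{S_3}$. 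Burnside's formula and the fact that a permutation of cycle type $(1^3)$, $(2,1)$, $(3)$ has eigenvalues $(1,1,1)$, $(1,1,-1)$, $(1,\omega,\omega^2)$ give
\[
\dim (U_\lambda)^{S_3} \;=\; \tfrac{1}{6}\bigl(s_\lambda(1,1,1) + 3\, s_\lambda(1,1,-1) + 2\, s_\lambda(1,\omega,\omega^2)\bigr), \qquad \omega = e^{2\pi i/3},
\]
and each Schur polynomial value is easily evaluated by Jacobi--Trudi.

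With the two tables in hand, the claimed identifications $K_{3,m}^{(d)} = 0$ for $d \le 4$, $K_{3,m}^{(5)} \cong V_{(3,2)}$, and $K_{3,m}^{(6)} \cong 2 V_{(4,2)} + V_{(3,3)} + V_{(3,2,1)} + V_{(2,2,2)}$ reduce to a partition-by-partition comparison. I expect the principal obstacle to be merely the bulk of the bookkeeping: two or three dozen pairs of multiplicities must be checked and reconciled. No individual step is subtle, each being an elementary application of the plethysm values above, Pieri/Littlewood--Richardson, or the evaluation of a Schur polynomial at $(1,1,\pm 1)$ and $(1,\omega,\omega^2)$; what the argument really demands is the careful coordination of these routine pieces.
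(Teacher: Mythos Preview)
Your proposal is correct and follows essentially the same route as the paper: decompose $(\freeinvar_{3,m}^3)^{(d)}$ via the symmetric-power formula (\ref{eq:symmetricpowers}) together with the small plethysms and Pieri/Littlewood--Richardson, decompose $R_{3,m}^{(d)}$ via Cauchy duality and the computation of $\dim(U_\lambda)^{S_3}$ by character averaging (the paper phrases this through Jacobi--Trudi and fixed-monomial counts, but the content is the same), and subtract. The only cosmetic difference is that the paper invokes Proposition~\ref{prop:general} directly for the vanishing in degrees $\le 4$, whereas you recover it from the same multiplicity comparison.
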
 

\begin{proof} The fact that $K_{3,m}^{(d)}=0$ for $d\leq 4$ follows for example from Proposition~\ref{prop:general}. 
Denote by $(\freeinvar_{3,m}^3)^{(d)}$ and $R_{3,m}^{(d)}$ the degree $d$ homogeneous component of $\freeinvar_{3,m}^3$ and $R_{3,m}$. 
By formula (\ref{eq:symmetricpowers}) we have 
\begin{align*}(\freeinvar_{3,m}^3)^{(5)}&\cong S^5(V_{(1)})+S^3(V_{(1)})\otimes V_{(2)}+S^2(V_{(1)})\otimes V_{(3)}+
V_{(2)}\otimes V_{(3)}+V_{(1)}\otimes S^2(V_{(2)})\\
&\cong V_{(5)}+3\cdot V_{(3)}\otimes V_{(2)}+V_{(1)}\otimes S^2(V_{(2)}), 
\end{align*}
whereas 
\begin{align*}(\freeinvar_{3,m}^3)^{(6)}&\cong 
V_{(6)}+V_{(4)}\otimes V_{(2)}+V_{(3)}\otimes V_{(3)}+V_{(2)}\otimes S^2(V_{(2)})
\\ &+V_{(1)}\otimes V_{(2)}\otimes V_{(3)}+ S^3(V_{(2)})+S^2(V_{(3)}).
\end{align*} 
By Pieri's rule and some known plethysm formulae (see Section I.8 in \cite{macdonald}) one derives 
$$(\freeinvar_{3,m}^3)^{(5)}\cong 5\cdot V_{(5)}+4\cdot V_{(4,1)}+4\cdot V_{(3,2)}+V_{(2,2,1)}$$ 
and 
$$(\freeinvar_{3,m}^3)^{(6)}\cong 7\cdot V_{(6)}+5\cdot V_{(5,1)}+8\cdot V_{(4,2)}+
V_{(4,1,1)}+2\cdot V_{(3,3)}+2\cdot V_{(3,2,1)}+2\cdot V_{(2,2,2)}.$$
To determine the $\gl_m$-module structure of $R_{3,m}$ we start from 
the action of $\gl_3\times\gl_m$ on $\mc[V^m]=\mc[x_{ij}\mid i=1,2,3,\quad j=1,\ldots,m]$ by $\mc$-algebra automorphisms given on the generators as follows: 
$(g,h)\in\gl_3\times\gl_m$ sends $x_{ij}$ to the $(i,j)$-entry of the matrix 
$g^T(x_{ij})_{i=1,2,3}^{j=1,\ldots,m}h$. Cauchy's formula (I.4.3 in \cite{macdonald}) tells us the $\gl_3\times\gl_m$-module structure of $\mc[V^m]$: 
$$\mc[V^m]\cong \bigoplus_{\lambda\in\partitions_{\min\{3,m\}}}V_{\lambda}\otimes V_{\lambda}$$
Consequently, 
the multiplicity of $V_{\lambda}$ in $R_{3,m}$ equals $\dim_{\mc}(V_{\lambda}^{S_3})$, where we identify $S_3$ with the subgroup of permutation matrices in $\gl_3$, and we write $V_{\lambda}^{S_3}$ for the subspace of $S_3$-fixed points in the $\gl_3$-module $V_{\lambda}$. 
By the Jacobi-Trudi Formula (I.3.4 in \cite{macdonald}), the character of an element $g\in S_3$ on $V_{\lambda}$ (where $h(\lambda)\leq 3$) 
equals the determinant of the $3\times 3$ matrix whose $(i,j)$-entry is $h_{\lambda_i-i+j}(z_1,z_2,z_3)$, 
where $h_k(z_1,z_2,z_3)$ is the $k$th complete symmetric polynomial in the eigenvalues 
$z_1,z_2,z_3$ of $g\in S_3 <\gl_3$.  On the other hand, if $g\in S_3<\gl_3$ has eigenvalues 
$z_1,z_2,z_3$, then $h_k(z_1,z_2,z_3)$ equals the number of monomials of degree $k$ in the variables $x_1,x_2,x_3$ fixed by $g$ (where $S_3$ acts by permuting the variables). 
Based on this one can quickly compute the $\gl_m$-module structure of $R_{3,m}$, and gets 
$$R_{3,m}^{(5)}\cong 5\cdot V_{(5)}+4\cdot V_{(4,1)}+3\cdot V_{(3,2)}+V_{(2,2,1)}$$ 
and 
$$R_{3,m}^{(6)}\cong 7\cdot V_{(6)}+5\cdot V_{(5,1)}+6\cdot V_{(4,2)}+V_{(4,1,1)}+V_{(3,3)}
+V_{(3,2,1)}+V_{(2,2,2)}.$$ 

Since the multiplicity of $V_{\lambda}$ in $K_{3,m}$ equals the difference of the multiplicities of 
$V_{\lambda}$ in $\freeinvar_{3,m}^3$ and in $R_{3,m}$, the statement follows. 
\end{proof}

\begin{proposition}\label{prop:highestweight} 
$\relone_{3,2}$, $\reltwo_{4,2}$, $\relthree_{2,2,2}$ are highest weight vectors for the action of $\gl_m$ on $K_{3,m}$, and none of them is contained in the ideal $(\freeinvar_{3,m}^3)^{(+)}K_{3,m}$. 
\end{proposition}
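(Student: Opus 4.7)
The plan is to verify, for each of the three elements $\relone_{3,2}$, $\reltwo_{4,2}$, $\relthree_{2,2,2}$, both that it is a highest weight vector of the stated weight and that it lies outside the ideal $(\freeinvar_{3,m}^3)^{(+)} K_{3,m}$. Membership in $K_{3,m}$ and the claimed multidegrees are immediate from the explicit formulae: $\relone_{3,2}$ and $\reltwo_{4,2}$ are defined as linear combinations of products $\Psi(\cdots)\,t(\cdot)$ with $\Psi \in \ker(\varphi_{3,m})$ by Proposition~\ref{prop:general}, and the written-out expansions in Section~\ref{sec:main} both confirm that they actually lie in $\freeinvar_{3,m}^3$ and make non-vanishing obvious; for $\relthree_{2,2,2}$, containment in $K_{3,m}$ is Proposition~\ref{prop:gram}, with non-vanishing clear from the diagonal term in the determinant expansion.

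For the highest weight vector property, $\relthree_{2,2,2}$ is handled directly: by Proposition~\ref{prop:gram}, the rule $g\cdot\relthree_{2,2,2} = \det^2(g)\,\relthree_{2,2,2}$ for $g\in\gl_3$ forces $\unipotent_3$ to fix it and gives the $\torus$-weight $(2,2,2)$, so it is a highest weight vector for $\gl_3$, and hence for every $\gl_m$ with $m\geq 3$ by Lemma~\ref{lemma:m1m2}. For the other two elements, rather than check $\unipotent_m$-invariance by hand, I would use Proposition~\ref{prop:kernelglstructure}: since $K_{3,m}^{(5)} \cong V_{(3,2)}$, any non-zero element of multidegree $(3,2)$ there lies in the one-dimensional $(3,2)$-weight space of the unique summand, hence is a highest weight vector. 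Similarly, the partition $(4,2)$ is not dominated by $(3,3)$, $(3,2,1)$, or $(2,2,2)$, so the multidegree $(4,2)$ subspace of $K_{3,m}^{(6)}$ is supported entirely on the two $V_{(4,2)}$-summands, where it coincides with the sum of two lines of highest weight vectors; therefore any non-zero element of $K_{3,m}^{(6)}$ of multidegree $(4,2)$ is a highest weight vector.

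For the statement that none of the three elements lies in $(\freeinvar_{3,m}^3)^{(+)} K_{3,m}$, the element $\relone_{3,2}$ is immediate by degree: the degree-$5$ component of the ideal is $\sum_{d\geq 1} \freeinvar^{(d)}\,K_{3,m}^{(5-d)}$, which vanishes because $K_{3,m}^{(\leq 4)} = 0$ by Proposition~\ref{prop:kernelglstructure}. For $\relthree_{2,2,2}$, the degree-$6$ part of the ideal is contained in $\freeinvar^{(1)}\,K_{3,m}^{(5)}$ (again because $K_{3,m}^{(\leq 4)}=0$), and this is a $\gl_m$-quotient of $V_{(1)}\otimes V_{(3,2)}\cong V_{(4,2)}\oplus V_{(3,3)}\oplus V_{(3,2,1)}$ by Pieri's rule, in particular having no $V_{(2,2,2)}$-isotypic component; since $\relthree_{2,2,2}$ spans a $V_{(2,2,2)}$-summand of $K_{3,m}^{(6)}$, it cannot lie in the ideal.

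The case of $\reltwo_{4,2}$ is the main obstacle, because the ideal contains a $V_{(4,2)}$-component of its own, so the isotypic argument alone is insufficient. I would reduce to the weight $(4,2)$: this weight subspace inside $\freeinvar^{(1)}\,K_{3,m}^{(5)}$ decomposes along factorisations $(4,2) = (a,b) + (4-a,2-b)$ with $(a,b)$ a weight of $V_{(1)}$ and $(4-a,2-b)$ a weight of $V_{(3,2)}$, and since $(4,1)$ is not a weight of $V_{(3,2)}$ the only surviving factorisation is $(1,0)+(3,2)$, so this weight space is one-dimensional and spanned by $t(x)\cdot \relone_{3,2}$. It therefore suffices to exhibit a single monomial that appears in $\reltwo_{4,2}$ with non-zero coefficient but not in $t(x)\cdot \relone_{3,2}$; the monomial $t(x^2 y)^2$ does the job, since every summand of $\relone_{3,2}$ contains the factor $t(x^2 y)$ at most once, so $t(x^2 y)^2$ cannot arise after multiplication by $t(x)$.
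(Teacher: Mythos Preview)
Your proof is correct and largely parallels the paper's. The highest-weight-vector claims are handled identically: Proposition~\ref{prop:kernelglstructure} plus a weight-space count for $\relone_{3,2}$ and $\reltwo_{4,2}$, and Proposition~\ref{prop:gram} together with Lemma~\ref{lemma:m1m2} for $\relthree_{2,2,2}$. Both proofs also reduce the degree-$6$ non-membership question to the identity $K_{3,m}^{(6)}\cap(\freeinvar_{3,m}^3)^{(+)}K_{3,m}=\sum_j t(x_j)\,K_{3,m}^{(5)}$. The one genuine difference is your treatment of $\relthree_{2,2,2}$: the paper singles out the monomial $3\,t(x_1x_2)t(x_1x_3)t(x_2x_3)$, which carries no degree-one factor $t(x_j)$ and therefore cannot occur in $\sum_j t(x_j)\,K_{3,m}^{(5)}$, whereas you invoke Pieri's rule to see that $V_{(1)}\otimes V_{(3,2)}$ has no $V_{(2,2,2)}$-constituent. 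Your isotypic argument is cleaner in that it avoids expanding the determinant; on the other hand, the paper's monomial approach is more uniform, since the same ``no linear factor $t(x_j)$'' observation disposes of $\reltwo_{4,2}$ directly via the term $6\,t(x^2y)^2$, without your intermediate weight-space dimension count (which, however, is correct and leads to the same monomial).
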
 

\begin{proof} Recall that the multidegree of any element of $V_{\lambda}$ is lexicographically smaller than $\lambda$. Therefore 
Proposition~\ref{prop:kernelglstructure} shows that the multihomogeneous component of multidegree $(3,2)$ in $K_{3,m}$ is one-dimensional, and its non-zero elements are the highest weight vectors of the summand $V_{(3,2)}$. On the other hand, $J_{3,2}$ belongs to $K_{3,m}$ and has multidegree $(3,2)$, so it is a highest weight vector. Moreover, since $K_{3,m}$ does not contain elements of degree less than five, we conclude that $J_{3,2}$ is not contained in $(\freeinvar_{3,m}^3)^{(+)}K_{3,m}$. 

Similarly, an inspection of the decomposition of $K_{3,m}^{(6)}$ given in Proposition~\ref{prop:kernelglstructure} shows that  the multihomogeneous component of multidegree $(4,2)$ is 
two-dimensional, and all its non-zero elements are highest weight vectors generating a submodule isomorphic to $V_{(4,2)}$. Consequently, $\reltwo_{4,2}$ is a highest weight vector. 

By Proposition~\ref{prop:gram} we know that $g\cdot J_{2,2,2}=\det^2(g) J_{2,2,2}$ for any $g\in \gl_3$, 
hence in the special case $m=3$,  $J_{2,2,2}$ spans a one-dimensional $\gl_3$-submodule isomorphic 
to $V_{(2,2,2)}$. Consequently, $J_{2,2,2}$ is a highest weight vector for any $m\geq 3$ by Lemma~\ref{lemma:m1m2}. 

Since the  minimal degree of an element of $K_{3,m}$ is $5$, we have 
$$K_{3,m}^{(6)}\cap (\freeinvar_{3,m}^3)^{(+)}K_{3,m}
=\sum_{j=1}^mt(x_j)K_{3,m}^{(5)}.$$ 
Note that $J_{4,2}$ contains the term 
$6t(x_1^2x_2)^2$ and $J_{2,2,2}$ contains the term  
$3t(x_1x_2)t(x_1x_3)t(x_2x_3)$. We conclude that none of them is contained in 
 $(\freeinvar_{3,m}^3)^{(+)}K_{3,m}$. 
\end{proof} 

Denote by $N_{(3,2)}^m,N_{(4,2)}^m,N_{(2,2,2)}^m$ the $\gl_m$-submodules in $\freeinvar_{3,m}^3$ generated by 
$J_{3,2},J_{4,2},J_{2,2,2}$. 

\begin{corollary}\label{cor:N1N2N3} 
$N_{(3,2)}^m$, $N_{(4,2)}^m$, and $N_{(2,2,2)}^m$ are irreducible $\gl_m$-submodules of $K_{3,m}$ isomorphic to $V_{(3,2)}$, 
$V_{(4,2)}$, and $V_{(2,2,2)}$. Moreover, the intersection of 
$N_{(3,2)}^m + N_{(4,2)}^m + N_{(2,2,2)}^m$ and the ideal $(\freeinvar_{3,m}^3)^{(+)}K_{3,m}$ is zero. 
\end{corollary}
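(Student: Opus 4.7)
The plan is to dispatch the two assertions of the corollary separately, feeding off Propositions~\ref{prop:highestweight} and \ref{prop:kernelglstructure}.

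The irreducibility is immediate: by Proposition~\ref{prop:highestweight} each of $\relone_{3,2},\reltwo_{4,2},\relthree_{2,2,2}$ is a highest weight vector in $K_{3,m}$ of weight $(3,2),(4,2),(2,2,2)$ respectively, so the $\gl_m$-submodule $N_\lambda^m$ it generates is automatically irreducible and isomorphic to $V_\lambda$.

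For the intersection, set $I := (\freeinvar_{3,m}^3)^{(+)}K_{3,m}$ and $M := N_{(3,2)}^m + N_{(4,2)}^m + N_{(2,2,2)}^m$. The three summands have pairwise distinct highest weights, so the sum is direct. Since $I$ is a graded $\gl_m$-submodule of $K_{3,m}$, any $f \in M \cap I$ decomposes by homogeneous degree and by $\gl_m$-isotypic type into components still lying in $I$, and the plan is to show each such component is zero. In degree $5$ we have $I^{(5)} = 0$ because $K_{3,m}^{(d)} = 0$ for $d \leq 4$ by Proposition~\ref{prop:kernelglstructure}; this kills the $N_{(3,2)}^m$-component. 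In degree $6$, $I^{(6)} = \sum_{j=1}^m t(x_j)\cdot K_{3,m}^{(5)}$ is a $\gl_m$-module quotient of $V_{(1)} \otimes K_{3,m}^{(5)} \cong V_{(1)} \otimes V_{(3,2)}$, which by Pieri's rule decomposes as $V_{(4,2)} \oplus V_{(3,3)} \oplus V_{(3,2,1)}$; since $V_{(2,2,2)}$ is absent from this list, the $V_{(2,2,2)}$-isotypic component of $I^{(6)}$ vanishes, killing the $N_{(2,2,2)}^m$-component. For the remaining $N_{(4,2)}^m$-component, $N_{(4,2)}^m \cap I$ is a $\gl_m$-submodule of the irreducible module $N_{(4,2)}^m$, hence is either $0$ or all of $N_{(4,2)}^m$; the latter is excluded because $\reltwo_{4,2} \notin I$ by Proposition~\ref{prop:highestweight}.

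The only step that requires genuine care is the Pieri-based analysis of $I^{(6)}$: by Proposition~\ref{prop:kernelglstructure} the degree-$6$ component of $K_{3,m}$ contains two copies of $V_{(4,2)}$, so one cannot rule out $V_{(4,2)}\subseteq I^{(6)}$ on representation-theoretic grounds alone. It is precisely the non-vanishing of $\reltwo_{4,2}$ modulo $I$, already established in Proposition~\ref{prop:highestweight}, that singles out the chosen copy $N_{(4,2)}^m$ from the image of the multiplication map and completes the argument.
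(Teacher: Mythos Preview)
Your proof is correct. The first part matches the paper exactly. For the second part, the paper gives a more uniform argument: since the three $N_\lambda^m$ are pairwise non-isomorphic irreducibles, every $\gl_m$-submodule of their direct sum is a partial direct sum $\bigoplus_{i\in S} N_{\lambda_i}^m$; as Proposition~\ref{prop:highestweight} shows that none of the three is contained in $I$, the intersection must be zero. You instead handle the three summands separately---degree counting for $N_{(3,2)}^m$, a Pieri analysis of $I^{(6)}$ for $N_{(2,2,2)}^m$, and the irreducibility trick only for $N_{(4,2)}^m$. This is longer than necessary (the argument you give for $N_{(4,2)}^m$ works verbatim for all three), but it does yield the extra structural information that $I^{(6)}$ is a quotient of $V_{(4,2)}\oplus V_{(3,3)}\oplus V_{(3,2,1)}$, and your closing remark correctly identifies why the $(4,2)$ case cannot be settled by Pieri alone.
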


\begin{proof} Taking into account the multidegrees of $\relone_{3,2},\reltwo_{4,2},\relthree_{2,2,2}$, the first statement immediately follows from Proposition~\ref{prop:highestweight}. 
Moreover, $N_{(3,2)}^m,N_{(4,2)}^m,N_{(2,2,2)}^m$ are pairwise non-isomorphic irreducible $\gl_m$-modules and none of them is contained in the $\gl_m$-module $(\freeinvar_{3,m}^3)^{(+)}K_{3,m}$ (again by Proposition~\ref{prop:highestweight}), hence their sum $N_{(3,2)}^m+N_{(4,2)}^m+N_{(2,2,2)}^m$ is disjoint from  $(\freeinvar_{3,m}^3)^{(+)}K_{3,m}$ by basic principles of representation theory.  
\end{proof}

Choose arbitrary $\mc$-bases $\basis_{(3,2)}^m$, $\basis_{(4,2)}^m$, and $\basis_{(2,2,2)}^m$  in 
$N_{(3,2)}^m$, $N_{(4,2)}^m$, and $N_{(2,2,2)}^m$,  and set 
$\basis^m:=\basis_{(3,2)}^m\cup\basis_{(4,2)}^m\cup\basis_{(2,2,2)}^m$. Then by Corollary~\ref{cor:N1N2N3}, $\basis^m$ can be extended to a minimal system of generators of the ideal $K_{3,m}$. 
To prove that $\basis^m$ is actually a minimal system of generators of $K_{3,m}$, it is sufficient to show that the ideal $K_{3,m}$ can be generated by $|\basis^m|$ elements. 

Recall that the dimension of the $\gl_m$-module $V_{\lambda}$ (where $\lambda\in\partitions_m$)  equals 
$$d_{\lambda}(m):=\prod_{1\leq i<j\leq m}\frac{\lambda_i-\lambda_j+j-i}{j-i} $$ 
by the Weyl dimension formula (see for example (7.1.17) in \cite{goodman-wallach}), and so 
$$|\basis^m|=d_{(3,2)}(m)+d_{(4,2)}(m)+d_{(2,2,2)}(m).$$

%%%%%%%%%%%%%%%%%%%%%%%%%%%%%%%%%%

\section{Hironaka decomposition} \label{sec:hironaka}

It is well known that 
$$\primary:=\{[x_j],[x_j^2],[x_j^3]\mid j=1,\ldots,m\}$$ 
is a {\it homogeneous system of parameters} in $R_{3,m}$. 
Write $\mc[\primary]$ for the $\mc$-subalgebra of $R$ generated by $\primary$. 
It is a polynomial ring in the $3m$ generators, and $R$ is a finitely generated $\mc[\primary]$-module. 
Moreover, since $R$ is Cohen-Macaulay, it is a free $\mc[\primary]$-module. A set $\secondary\subset R_{3,m}$ of homogeneous elements 
constitutes a free $\mc[\primary]$-module generating system of $R$ if and only if the image of $\secondary$ is a $\mc$-vector space basis of the factor algebra $R/(\primary)$ of $R$ modulo the ideal $(\primary)$ generated by $\primary$. The elements of $\primary$ (respectively $\secondary$) are refered to as the 
{\it primary} (respectively {\it secondary}) generators of $R_{3,m}$, and 
\begin{equation}\label{eq:hironaka}
R_{3,m}=\bigoplus_{s\in \secondary}\mc[\primary]\cdot s\end{equation} 
the {\it Hironaka decomposition} of $R_{3,m}$. 
Write $\mingen:=\{[x_1^{\alpha_1}\cdots x_m^{\alpha_m}]\mid 
\alpha_1+\cdots+\alpha_m\leq 3\}$ 
for the chosen minimal $\mc$-algebra generating system of $R_{3,m}$. We have $\mingen\supset \primary$, 
and we may assume that $\secondary$ consists of products of powers of the elements of $\mingen$ 
(in particular, then $\secondary$ consists of multihomogeneous elements, and the empty product $1\in \secondary$). 
Recall that the Hilbert series of an $\mn_0^m$-graded vector space 
$A:=\bigoplus_{\alpha}A^{\alpha}$ with $\dim_{\mc}(A^{\alpha})<\infty$ is the formal power series in $\mz[[t_1,\ldots,t_m]]$ 
defined by 
$$H(A;t_1,\ldots,t_m):=\sum_{\alpha=(\alpha_1,\ldots,\alpha_m)}
\dim_{\mc}(A^{\alpha})t_1^{\alpha_1}\cdots t_m^{\alpha_m}.$$ 
It follows from (\ref{eq:hironaka}) that 
\begin{equation} \label{eq:hironakahilbert} 
H(R_{3,m};t_1,\ldots,t_m)=\frac{H(\Span_{\mc}(\secondary); t_1,\ldots,t_m)}
{\prod_{j=1}^m(1-t_j)(1-t_j^2)(1-t_j^3)}
\end{equation}  
where $\Span_{\mc}(\secondary)$ is the $\mc$-subspace in $R$ spanned by $\secondary$ 
(since $\secondary$ consists of multihomogeneous elements, it is $\mn_0^m$-graded). 
On the other hand, the Hilbert series of $R$ can be explicitly calculated (see Section~\ref{sec:hilbertseries}), and from this we know the number of elements of $\secondary$ having multidegree $\alpha$ for each $\alpha$. 

The following two statements provide our basis to find a complete system of relations. 

\begin{lemma} \label{lemma:congruences} 
Fix a positive integer $d$, and let $\trysecondary$ be a finite set of monomials in the elements of $Q$, each element of $\trysecondary$ having degree at most $d$,  and suppose that $\trysecondary$ satisfies the following: 
\begin{itemize}
\item[(i)] $1\in \trysecondary$, and $\mingen\setminus \primary\subseteq \trysecondary$; 
\item[(ii)] For each $e=1,\ldots,d$, the number of degree $e$ elements in $\trysecondary$ equals the number of degree $e$ elements in a system of secondary generators of $R_{3,m}$. 
\item[(iii)] For any $s\in \trysecondary$ and $q\in \mingen\setminus \primary$ with 
$\deg(s\cdot q)\leq d$ there exist scalars $\gamma_a\in\mc$ ($a\in \trysecondary$, $\deg(a)=\deg(sq)$) 
with 
\begin{equation}\label{eq:sq}
s\cdot q-\sum_{\deg(a)= \deg(sq)}\gamma_aa\in (\primary).
\end{equation} 
\end{itemize} 
Then $\trysecondary$ can be extended to a system $\secondary$ of secondary generators of $R_{3,m}$ such that 
$\trysecondary$ coincides with the subset of degree $\leq d$ elements of $\secondary$. 
\end{lemma}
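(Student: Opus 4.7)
The plan is to show that the images of $\trysecondary$ in the quotient $R_{3,m}/(\primary)$ form a $\mc$-basis of the direct sum of its homogeneous components of degree $\leq d$. Granting this, I would extend to a full homogeneous $\mc$-basis of $R_{3,m}/(\primary)$ by adjoining $\mc$-linearly independent images of degree $>d$, lift each added basis vector to a homogeneous element of $R_{3,m}$ of the same degree, and take $\secondary$ to be the union of $\trysecondary$ with these lifts. Then $\secondary$ is a system of secondary generators of $R_{3,m}$, and by construction its degree $\leq d$ portion is exactly $\trysecondary$. Since the number of degree $e$ elements in any homogeneous system of secondary generators equals $\dim_{\mc}((R_{3,m}/(\primary))^{(e)})$, hypothesis (ii) already gives $\trysecondary$ the correct cardinality in each degree $e\leq d$, so the basis claim reduces to a spanning claim.

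I would establish spanning by induction on $e$: every element of $R_{3,m}^{(e)}$ with $e\leq d$ is congruent modulo $(\primary)$ to a $\mc$-linear combination of degree $e$ elements of $\trysecondary$. Since $\mingen$ generates $R_{3,m}$ as a $\mc$-algebra, it is enough to treat monomials $M=q_1q_2\cdots q_r$ with each $q_i\in\mingen$. If some $q_i$ lies in $\primary$, then $M\in(\primary)$ and the claim is vacuous. Otherwise factor $M=q\cdot M'$ with $q:=q_1\in\mingen\setminus\primary$ and $M'$ a monomial of degree $e-\deg(q)<e$; by the inductive hypothesis, $M'\equiv \sum_s\gamma_s\,s\pmod{(\primary)}$, where $s$ runs over elements of $\trysecondary$ of degree $e-\deg(q)$. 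Multiplying by $q$ and then invoking (iii) for each product $s\cdot q$ (applicable because $\deg(sq)=e\leq d$) rewrites $M$ modulo $(\primary)$ as a $\mc$-linear combination of degree $e$ elements of $\trysecondary$. The base cases $e=0,1$ are handled by (i): the only degree $0$ element of $R_{3,m}$ is $1\in\trysecondary$, and any degree $1$ element of $\mingen$ belongs either to $\primary$ or to $\mingen\setminus\primary\subseteq\trysecondary$.

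No serious obstacle arises, as condition (iii) has been formulated to be exactly the recursion needed to drive the induction. The main bookkeeping point is to ensure that the inductive factorization never produces a product whose degree exceeds $d$, which is automatic since every intermediate product has degree $e\leq d$; this is where the finite range ``degree $\leq d$'' in (iii) is used tightly. Cohen-Macaulayness of $R_{3,m}$ and its freeness as a $\mc[\primary]$-module enter only at the counting step, to identify cardinalities of secondary generators in each degree with dimensions of the corresponding homogeneous components of $R_{3,m}/(\primary)$ and thereby to convert the spanning statement into a basis statement.
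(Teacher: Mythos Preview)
Your proposal is correct and is exactly the ``straightforward induction on the degree'' the paper invokes without further detail; you have simply spelled it out. The spanning argument via factoring off one $q\in\mingen\setminus\primary$ and applying (iii) is the intended mechanism, and your use of (ii) together with the characterization of secondary generators as lifts of a homogeneous basis of $R_{3,m}/(\primary)$ is precisely how the extension to $\secondary$ is obtained.
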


\begin{proof} A straightforward induction on the degree. 
\end{proof} 

We shall use the following notation: for $f_1,f_2\in R$, we write $f_1\equiv f_2$ if 
$f_1-f_2\in (\primary)$. For example, (\ref{eq:sq}) reads as 
$$s\cdot q\equiv \sum_{\deg(a)=\deg(sq)}\gamma_aa,$$ 
and it means that there exists a multihomogeneous element $r_s^q$ in the kernel $K_{3,m}$ of the surjection 
$\varphi:\freeinvar_{3,m}^3\to R_{3,m}$ such that 
\begin{equation}\label{eq:rsq} 
r_s^q-s^\star\cdot q^\star+\sum_{\deg(a)=\deg(sq)}\gamma_a a^\star
\ \in \ \sum_{j=1}^m\sum_{k=1}^3\freeinvar_{3,m}^3\cdot t(x_j^k)
\end{equation} 
where given a product $c=[w_1]\cdots [w_l]$ of the generators $[w_i]\in \mingen$ we write 
$c^\star:=t(w_1)\cdots t(w_l)\in\freeinvar$. 
(Of course, $r_s^q$ is not unique, it is determined modulo the 
intersection of  $K_{3,m}$ and the ideal on the right hand side of (\ref{eq:rsq}).) 

\begin{lemma}\label{lemma:rsq} Suppose that the assumptions of Lemma~\ref{lemma:congruences} 
hold. For all $s\in S$, $q\in \mingen\setminus \primary$ with $\deg(qs)\leq d$ choose an element $r_s^q$ satisfying (\ref{eq:rsq}).  
Then the ideal generated by the $r_s^q$ contains all homogeneous components of $K_{3,m}$ up to degree $d$. 
\end{lemma}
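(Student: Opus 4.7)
\emph{Plan.} Let $I$ denote the ideal of $\freeinvar_{3,m}^3$ generated by the chosen $r_s^q$, and write $\mc[\primary^\star]$ for the polynomial subalgebra of $\freeinvar_{3,m}^3$ generated by the primary variables $t(x_j^k)$, $j=1,\ldots,m$, $k=1,2,3$. The plan is first to establish by induction on degree the following normal form: every $u \in \freeinvar_{3,m}^3$ of degree $\le d$ satisfies
$$u \equiv \sum_{a \in \trysecondary} p_a \cdot a^\star \pmod{I}, \qquad p_a \in \mc[\primary^\star].$$
For a monomial $u$ I would split off any primary factor $t(x_j^k)$ and apply induction to the cofactor; otherwise $u = q_1^\star \cdots q_L^\star$ with each $q_l \in \mingen \setminus \primary \subseteq \trysecondary$ (hypothesis~(i) of Lemma~\ref{lemma:congruences}), and after applying induction to $q_1^\star \cdots q_{L-1}^\star$ each resulting product $a^\star q_L^\star$ (of degree $\le d$) can be rewritten via~(\ref{eq:rsq}) as $\sum_b \gamma_b b^\star + r_a^{q_L} - \pi$, where $\pi \in \sum_{j,k} \freeinvar_{3,m}^3 \cdot t(x_j^k)$ is homogeneous of the same degree. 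The term $r_a^{q_L}$ is absorbed into $I$, and the primary residue $\pi$, once one factor $t(x_j^k)$ is pulled out, leaves a cofactor of strictly smaller degree to which induction reapplies.

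Given a homogeneous $f \in K_{3,m}$ of degree $e \le d$, the normal form yields $f \equiv \sum_a p_a a^\star \pmod{I}$ with $p_a \in \mc[\primary^\star]$. Since $I \subseteq K_{3,m}$, applying $\varphi$ gives $\sum_a \varphi(p_a) \cdot a = 0$ in $R_{3,m}$. By Lemma~\ref{lemma:congruences}, $\trysecondary$ extends to a full secondary system $\secondary$, whence the Hironaka decomposition $R_{3,m} = \bigoplus_{s \in \secondary} \mc[\primary] \cdot s$ realises $R_{3,m}$ as a free $\mc[\primary]$-module; since $\trysecondary \subseteq \secondary$, each coefficient $\varphi(p_a) \in \mc[\primary]$ must vanish. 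Because the primary generators $[x_j^k]$ are algebraically independent (they form a homogeneous system of parameters), $\varphi$ restricts to an isomorphism $\mc[\primary^\star] \xrightarrow{\sim} \mc[\primary]$, so $p_a = 0$ for every $a$, and hence $f \in I$.

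The principal obstacle is the reduction step: the relations furnished by hypothesis~(iii) hold only modulo the ideal of primary variables, not modulo $I$ itself, so each application of a relation $r_a^{q_L}$ produces a primary residue $\pi$ that must be reduced in its own right. What makes the recursion terminate is that extracting a single primary factor from $\pi$ strictly decreases degree; the careful book-keeping needed is then to verify that every intermediate product $a^\star q_L^\star$ arising during the reduction still satisfies $\deg(a q_L) \le d$ and thus genuinely falls within the scope of hypothesis~(iii).
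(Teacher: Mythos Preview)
Your proof is correct and is precisely the kind of argument the authors had in mind: the paper's own proof consists of the single word ``Straightforward.'' Your normal-form reduction modulo $I$ together with the free $\mc[\primary]$-module structure coming from Lemma~\ref{lemma:congruences} is the natural way to unpack that word, and your bookkeeping (homogeneity ensuring $\deg(aq_L)\le d$, termination via the strict degree drop when a primary factor is extracted) is exactly what is needed.
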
 

\begin{proof} Straightforward. 
\end{proof} 

We shall use also the special case $n=3$ of Lemma 6.1 from \cite{domokos}: 

\begin{lemma}\label{lemma:x^nequiv0} 
If the monomial $x_1^{\alpha_1}\cdots x_m^{\alpha_m}\in\monoms_m$ has degree at least $3$ in one of the variables $x_1,\ldots,x_m$, and $\alpha_1+\cdots+\alpha_m\geq 4$, then 
$[x_1^{\alpha_1}\cdots x_m^{\alpha_m}]\equiv 0$. 
\end{lemma}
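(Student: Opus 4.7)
The plan is to derive the congruence directly from a single instance of the fundamental relation $\Psi_4$ of (\ref{eq:fundrel}) for $n=3$. By hypothesis, some variable, say $x_j$, satisfies $\alpha_j \geq 3$, and the total degree $\sum_i \alpha_i \geq 4$ exceeds $3$, so we may factor
$$x_1^{\alpha_1}\cdots x_m^{\alpha_m} = x_j^3 \cdot u, \qquad u := x_j^{\alpha_j - 3}\prod_{i\neq j} x_i^{\alpha_i} \in \monoms_m,$$
where $u$ has degree $\sum_i\alpha_i - 3 \geq 1$, hence $[u]$ is a well-defined polarized power sum of positive degree.

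Next I would apply the relation $\Psi_4(x_j, x_j, x_j, u) = 0$ in $R_{3,m}$. Using the explicit expansion of $\Psi(w_1,w_2,w_3,w_4)$ recorded before the statement of Theorem~\ref{thm:main} and collecting the equal terms produced by the three repeated arguments, this relation reads
$$6[x_j^3 u] \;=\; 2[x_j^3][u] + 6[x_j^2 u][x_j] + 3[x_j^2][x_j u] - 3[x_j^2][x_j][u] - 3[x_j u][x_j]^2 + [x_j]^3[u].$$
Every term on the right-hand side contains as a factor at least one element of $\primary = \{[x_i],[x_i^2],[x_i^3] \mid i=1,\ldots,m\}$, namely $[x_j]$, $[x_j^2]$, or $[x_j^3]$. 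Hence the right-hand side lies in the ideal $(\primary)$, and consequently $[x_1^{\alpha_1}\cdots x_m^{\alpha_m}] = [x_j^3 u] \equiv 0$.

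There is no real obstacle here: the only thing to verify is the bookkeeping of the expansion of $\Psi_4$ when three of its arguments coincide (which simply rescales the six-fold sum over pairings and the four-fold sum over triples by the appropriate multinomial coefficients), and the observation that in each nontrivial summand at least one factor of the form $[x_j]$, $[x_j^2]$, or $[x_j^3]$ appears. The degree-$\geq 4$ hypothesis is used precisely to guarantee that $u$ is not the empty monomial, so that $[u]$ really is a polarized power sum rather than a constant, and the argument makes sense as a congruence modulo $(\primary)$.
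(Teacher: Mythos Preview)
Your argument is correct. The expansion of $\varphi(\Psi_4(x_j,x_j,x_j,u))=0$ is accurate, and each surviving term on the right-hand side visibly carries a factor $[x_j]$, $[x_j^2]$, or $[x_j^3]$, so lies in $(\primary)$; hence $[x_j^3u]\equiv 0$ as claimed.

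The paper itself does not prove this lemma: it simply quotes it as the $n=3$ case of Lemma~6.1 in \cite{domokos}. Your proof is therefore not a comparison target so much as a self-contained substitute for that citation. The underlying idea (one application of the fundamental relation with three arguments equal to a single variable) is almost certainly the same as in the cited reference, so there is no genuinely different method here---you have just made explicit what the paper imports. One minor remark: your last paragraph slightly overstates the role of the hypothesis $\sum_i\alpha_i\geq 4$. Even if $u=1$ the conclusion is trivially true since then $[x_j^3u]=[x_j^3]\in\primary$; the hypothesis is really there to exclude this degenerate case from the statement rather than being essential to the mechanism of the proof.
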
 

%%%%%%%%%%%%%%%%%%%%%%%%%%%%%%%%%%%%

\section{Hilbert series} \label{sec:hilbertseries}

In this section we express the Hilbert series of $R_{3,m}$ in a form that is practical to evaluate for small $m$. 
The symmetric group $S_3$ has three irreducible complex characters: $\chi_0$, the trivial character, 
$\chi_1$, the character of the $2$-dimensional irreducible representation, and $\chi_2$, the sign character. Denote by $C$ the {\it character ring} of $S_3$; i.e. $C$ is the subring of the algebra of central functions on $S_3$ generated by $\chi_0,\chi_1,\chi_2$. It is a free $\mz$-module spanned by $\chi_0,\chi_1,\chi_2$. The multiplication in $C$ is given as follows: $\chi_0$ is the identity element, 
$\chi_1^2=\chi_0+\chi_1+\chi_2$, $\chi_2^2=\chi_0$, $\chi_1\cdot \chi_2=\chi_1$. 
For a graded $S_3$-module $A:=\bigoplus_{k=0}^{\infty}A^{(k)}$ we set 
$H_{\chi_i}(A;t):=\sum_{k=0}^{\infty}\mult_{\chi_i}(A^{(k)})t^k$, 
where  $\mult_{\chi_i}(A^{(k)})$ denotes the multiplicity of the irreducible representation with character $\chi_i$ as a summand of $A^{(k)}$. Moreover, 
set 
$$H_{S_3}(A;t):=\sum_{i=0}^2\chi_i H_{\chi_i}(A;t)\in C[[t]].$$  
One defines the Hilbert series of a multigraded $S_3$-module in a similar way. 
Clearly, the Hilbert series of the multigraded vector space $R$ coincides with the coefficient of 
$\chi_0$ in 
$$H_{S_3}(\mc[V^m];t_1,\ldots,t_m)\in  \sum_{i=0}^2 \mz[[t_1,\ldots,t_m]]\chi_i.$$ 
We have the isomorphism 
$\mc[V^m]\cong\mc[V]\otimes\cdots\otimes\mc[V]$, hence 
$$H_{S_3}(\mc[V^m];t_1,\ldots,t_m)=\prod_{j=1}^mH_{S_3}(\mc[V];t_j)$$
where multiplication is undestood in the ring of formal power series $C[[t_1,\ldots,t_m]]$ with coefficients in the character ring $C$ of $S_3$. 
It is well known that 
$$H_{S_3}(V;t)=\frac{\chi_0+(t+t^2)\chi_1+t^3\chi_2}{(1-t)(1-t^2)(1-t^3)}.$$  
Taking into account (\ref{eq:hironakahilbert}) we conclude that the 
Hilbert series of a system of multihomogeneous secondary generators $\secondary$ (defined in Section~\ref{sec:hironaka}) equals the coefficient of $\chi_0$ in 
\begin{equation}\label{eq:hilbS} \prod_{j=1}^m
(\chi_0+(t_j+t_j^2)\chi_1+t_j^3\chi_2)\in \sum_{i=0}^2 \mz[[t_1,\ldots,t_m]]\chi_i.
\end{equation} 

%%%%%%%%%%%%%%%%%%%%%%%%%%%%%%%%%%%%

\section{The cases $m=2,3,4$}\label{sec:m=2,3,4} 

In this section we prove that the kernel $K_{3,4}$ of the surjection 
$\varphi:\freeinvar_{3,4}^3\to R_{3,4}$ can be generated by 
$$d_{(3,2)}(4)+d_{(4,2)}(4)+d_{(2,2,2)}(4)=60+126+10=196$$ 
elements. By the concluding remarks in Section~\ref{sec:minimality}, it follows that Theorem~\ref{thm:main} holds in the special case $m=4$. This finishes the proof of Theorem~\ref{thm:main} for arbitrary $m$ by the concluding remark of Section~\ref{sec:reductionm=4}. 

To simplify notation, we shall write $x,y,z,w$ instead of $x_1,x_2,x_3,x_4$. 

%%%%%%%%%%%%%%%%%%%%%%%%%%%%%%%%%%%%

\subsection{The case $m=2$}\label{subsec:m=2} 

(This case is sketched in \cite{domokos}.) 
By (\ref{eq:hilbS}) we have 
$$H(\secondary;t,u)=1+tu+t^2u+tu^2+t^2u^2+t^3u^3.$$ 
Set 
$$S:=\{1, [xy], [x^2y],[xy^2], [xy]^2,[x^2y][xy^2] \}.$$ 
The equality $\varphi(\Psi(x,x,y,y))=0$ yields the congruence 
\begin{equation}\label{eq:x^2y^2}[x^2y^2]\equiv \frac 13[xy]^2\end{equation}
and the equality $\varphi(\Psi(xy,x,x,y))=0$ yields 
$$6[xy\cdot x\cdot x\cdot y]\equiv 4[x^2y][xy].$$ 
It follows by Lemma~\ref{lemma:x^nequiv0} that 
\begin{equation}\label{eq:[x^2y][xy]}
[x^2y][xy]\equiv 0.
\end{equation}
As explained before Lemma~\ref{lemma:rsq}, to the congruence (\ref{eq:[x^2y][xy]}) there belongs an element $r_{[x^2y]}^{[xy]}\in K_{3,2}$. 
By symmetry in $x$ and $y$, we have also the congruence and the corresponding relation: 
$$[xy^2][xy]\equiv 0\quad \mbox{implied by} \quad r_{[xy^2]}^{[xy]}\in K_{3,2}.$$
We have the congruence 
$$6[xy\cdot xy\cdot x\cdot x]\equiv 4[x^3y][xy]+2[x^2y]^2$$ 
(obtained by substituting the factors of $xy\cdot xy\cdot x\cdot x$ on the left hand side into $\Psi$), 
yielding by Lemma~\ref{lemma:x^nequiv0} 
$$[x^2y]^2\equiv 0\quad\mbox{and}\quad r_{[x^2y]}^{[x^2y]}\in K_{3,2}.$$
By symmetry, $[xy^2]^2\equiv 0$ from $r_{[xy^2]}^{[xy^2]}\in K_{3,2}$. 
Finally, we have 
$$6[xy\cdot xy\cdot x\cdot y]\equiv 5[x^2y^2][xy]+2[x^2y][xy^2]-[xy]^3$$
and taking into account Lemma~\ref{lemma:x^nequiv0} and (\ref{eq:x^2y^2}) we get 
$$[xy]^3\equiv -3[x^2y][xy^2]\quad \mbox{and}\quad r_{[xy][xy]}^{[xy]}.$$ 
Clearly we can choose $r_{[xy]}^{[xy]}=0$. Multiplying the congruence (\ref{eq:[x^2y][xy]}) by $[xy^2]$ we get $[xy]([x^2y][xy^2])\equiv 0$, hence we can choose  
$$r_{[x^2y][xy^2]}^{[xy]}:=t(xy^2)\cdot r_{[x^2y]}^{[xy]}.$$
Similarly, it is easy to see that for the remaining $s\in\trysecondary$ and $q\in\mingen\setminus\primary$ 
the element $r_s^q$ can be chosen from the ideal generated by the $5$ elements of 
$K_{3,2}$ introduced already. 
It follows by Lemma~\ref{lemma:congruences} that $\trysecondary$ is a system of secondary generators 
of $R_{3,2}$, and by Lemma~\ref{lemma:rsq} the ideal $K_{3,2}$ is generated by 
$$r_{[x^2y]}^{[xy]}, \quad r_{[xy^2]}^{[xy]},\quad r_{[x^2y]}^{[x^2y]}, \quad r_{[xy][xy]}^{[xy]}, \quad r_{[xy^2]}^{[xy^2]}.$$ 
Moreover, since $d_{(3,2)}(2)+d_{(4,2)}(2)=2+3=5$, the above is a minimal system of generators 
of the ideal $K_{3,2}$.  

%%%%%%%%%%%%%%%%%%%%%%%%%%%%%%%%%%%%%%%%

\subsection{The case $m=3$}\label{subsec:m=3} 

In Table~\ref{table:monomialsm=3} we collect the monomials in the elements of $\mingen\setminus\primary$  of descending multidegree $\alpha$ with all $\alpha_i>0$, up to total degree $8$. Monomials congruent to $0$ are indicated by $\star$ (and we indicate by $\star$ the multidegrees where all monomials are congruent to $0$), and the symbol $\sim$ indicates that 
some non-zero scalar multiples of the given monomials are congruent modulo $(\primary)$. 
Table~\ref{table:R_m=3}  should be interpreted as follows: its second line for example says that in multidegree 
$(3,1,1)$ we have the congruence  $[x^2y][xz]+[x^2z][xy]\equiv 0$. Hence we may choose a multihomogeneous  element 
$r_{3,1,1}\in K_{3,3}$ of multidegree $(3,1,1)$ differing 
from $t(x^2y)t(xz)+t(x^2z)t(xy)$ by an element of the ideal of $\freeinvar_{3,3}^3$ generated by $t(x^i),t(y^i),t(z^i)$, $i=1,2,3$. (From now on we 
 change the notation for the relations, the lower indices indicate their multidegree.)  

Note that $\Span_{\mc}(\primary)$  and the ideal $(\primary)$ are not $\gl_m$-submodules in $R_{n,m}$. However, they are $S_m$-submodules, where we 
think of the symmetric group $S_m$  as the subgroup  of  $\gl_m$  consisting of permutation matrices. 
Observe that some of the congruence in Table~\ref{table:R_m=3}  are symmetric or skew symmetric in two variables, and some is symmetric in $x,y,z$. 
We may assume that the corresponding elements $r_{3,1,1}$, $r_{2,2,1}^{(1)}$, etc. are chosen so that they also have the corresponding symmetry or skew-symmetry. 
The last two columns of Table~\ref{table:R_m=3} contain  the number of $S_3$-translates (resp. 
$S_4$-translates) of the relation listed in the third column, where we count the $S_m$-translates up to non-zero scalar multiples, so by the above observation the number of 
$S_m$-translates of $r$ equals the index in $S_m$ of the stabilizer of the congruence corresponding to $r$. 

Denote by $\basis$ the relations listed in the third column of Table~\ref{table:R_m=3} and all their $S_3$-translates. Note that the sum of the numbers in the last but one column of Table~\ref{table:R_m=3} equals the cardinality of $\basis$, so $|\basis|=43$.

Using the $S_3$-translates of the relations in Table~\ref{table:R_m=3} one can easily justify the 
$^\star$ symbols and the equivalences $\sim$ in Table~\ref{table:monomialsm=3}. 
This means that 
up to total degree $8$, all monomials (having descending multidegree) in $\mingen\setminus\primary$ can be reduced to linear combinations of the monomials given in Table~\ref{table:S_m=3}.  (For multidegrees with $\alpha_3=0$, this was shown already in section ~\ref{subsec:m=2}.) 
One can easily see from (\ref{eq:hilbS}) that for each descending multidegree $\alpha$, the number of 
elements in Table~\ref{table:S_m=3} with multidegree $\alpha$ coincides with the coefficient of 
$t_1^{\alpha_1}t_2^{\alpha_2}t_3^{\alpha_3}$ in $H(\secondary;t_1,t_2,t_3)$. 
Define $\trysecondary$ as follows: in descending multidegrees its  
elements are listed in Table~\ref{table:S_m=3}, and 
if $\beta$ is a multidegree in the $S_3$-orbit of some descending multidegree $\alpha$, then choose 
a permutation $\pi\in S_3$ with $\beta_i=\alpha_{\pi(i)}$, and include in $\trysecondary$ the images under $\pi$ of the elements of multidegree $\alpha$ in Table~\ref{table:S_m=3}. (Of course, the set $\trysecondary$ is not uniquely defined: for certain multidegrees, say for multidegree $(1,3,1)$ we may choose for $\pi$ the transposition $(12)$ or the three-cycle $(123)$. However, this does not influence the arguments below.)  
Since the set $\basis$ is (essentially) $S_3$-stable, it follows that up to degree $\leq 8$, all monomials in $\mingen\setminus\primary$ can be reduced to linear combinations of $\trysecondary$ using the relations in $\basis$. Moreover, $H(\trysecondary;t_1,t_2,t_3)=H(\secondary; t_1,t_2,t_3)$. 
Consequently, by  Lemmas~\ref{lemma:congruences} and \ref{lemma:rsq}, $\trysecondary$ is a system of secondary generators, and $\basis$ generates the ideal $K_{3,3}$ up to degree $8$. We know from Proposition~\ref{prop:general} that $K_{3,3}$ is generated in degree $\leq 8$, hence $\basis$ generates $K_{3,3}$.  
Since $d_{(3,2)}(3)+d_{(4,2)}(3)+d_{(2,2,2)}(3)=15+27+1=43=|\basis|$, 
we conclude that $\basis$ is a minimal system of generators of the ideal $K_{3,3}$. 

We finish this Section with the proof of  the congruences in Table~\ref{table:R_m=3}. The relations $r_{3,2}$, $r_{4,2}$, 
$r_{3,3}$ were explained in section~\ref{subsec:m=2}. 
The relation $\varphi(\Psi(w_1,w_2,w_3,w_4))=0$ implies a congruence of multidegree 
${\mathrm{multideg}}(w_1w_2w_3w_4)$  of the form 
$$[w_1\cdot w_2\cdot w_3\cdot w_4]\equiv \cdots$$ where $\cdots$ is a linear combination of monomials in elements $[u]$ with $\deg(u)<\deg(w_1w_2w_3w_4)$, 
which can be written as a polynomial in the elements of $\mingen\setminus\primary$ using (\ref{eq:x^2yz}) below.

\paragraph{$r_{3,1,1}$:} $\varphi(\Psi(x^2,x,y,z))=0$ and $[x^3yz]\equiv 0$ (by Lemma~\ref{lemma:x^nequiv0}) imply 
$$0\equiv 6[x^3yz]=6[x^2\cdot x\cdot y\cdot z]\equiv [x^2y][xz]+[x^2z][xy].$$  

\paragraph{$r_{2,2,1}^{(1)}$, $r_{2,2,1}^{(2)}$:}  
Eliminate $[x^2y^2z]$ from the following consequences of the fundamental relation: 
\begin{equation*}\label{eq:[221]1}
6[ x\cdot y\cdot z\cdot xy]\equiv 3[xy][xyz]+[xz][xy^2]+[x^2y][yz]
\end{equation*}
\begin{equation*}\label{eq:[221]2}
6[ x^2\cdot y\cdot y\cdot z]\equiv 2[x^2y][yz]
\end{equation*}
\begin{equation*}\label{eq:[221]3}
6[ x\cdot x\cdot y^2\cdot z]\equiv 2[xy^2][xz]
\end{equation*}

\paragraph{$r_{4,1,1}$: } 
We have 
$0\equiv [x^4yz]=[x^2\cdot x^2\cdot y\cdot z]\equiv\frac 13[x^2y][x^2z]$. 

\paragraph{$r_{3,2,1}^{(2)}$:} Follows by 
$0\equiv 6[x^3y^2z]=6[x^2\cdot x\cdot y^2\cdot z]\equiv [x^2y^2][xz]+[x^2z][xy^2]$
and (\ref{eq:x^2y^2}). 

\paragraph{$r_{3,2,1}^{(1)}$:} The congruences 
$0\equiv 6[x^3y^2z]=6[xyz\cdot x\cdot x\cdot y]\equiv 2[x^2y][xyz]+2[x^2yz][xy],$
and 
\begin{equation}\label{eq:x^2yz} [x^2yz]\equiv \frac 13 [xy][xz]
\end{equation} 
yield 
$[xy]^2[xz]+3[xyz][x^2y]\equiv 0$, and this and $r_{3,2,1}^{(2)}$ 
imply $r_{3,2,1}^{(1)}$. 

\paragraph{$r_{2,2,2}^{(1)}$, $r_{2,2,2}^{(2)}$:} Eliminate $[x^2y^2z^2]$ from the congruences 
\begin{align*}6[x\cdot x\cdot y^2\cdot z^2]&\equiv 2[xy^2][xz^2]\\
6[x^2\cdot y\cdot y\cdot z^2]&\equiv 2[x^2y][yz^2]\\
6[xyz\cdot x\cdot y\cdot z] &\equiv 2[xyz]^2+[x^2yz][yz]+[xy^2z][xz]+[xyz^2][xy]\\
6[xy\cdot x\cdot y\cdot z^2]&\equiv 3[xy][xyz^2]+[x^2y][yz^2]+[xy^2][xz^2]
\end{align*}
and use (\ref{eq:x^2yz}). 

%%%%%%%%%%%%%%%%%%%%%%%%%%%%%%%%%%%%

\subsection{The case $m=4$} \label{subsec:m=4} 

The arguments are the same as in section~\ref{subsec:m=3}. 
We just give the corresponding tables and prove the new congruences (involving all the four variables). 
Tables~\ref{table:R_m=4}, \ref{table:S_m=4} and \ref{table:monomialsm=4} deal only with multidegrees $\alpha$ with all $\alpha_1,\alpha_2,\alpha_3,\alpha_4>0$, since the remaining multidegrees have been taken care in section~\ref{subsec:m=3}. 

The congruence in Table~\ref{table:R_m=4} corresponding to $r_{2,1,1,1}$ is symmetric in the variables $z,w$, hence the number of its $S_4$-translates is 
$\frac{24}{2}=12$. The same holds for $r_{3,1,1,1}$ and $r_{2,2,1,1}^{(3)}$. 
The congruence corresponding to $r_{2,2,1,1}^{(1)}$ is symmetric in the variables $x,y$ and also in the variables $z,w$, hence the number of its $S_4$-translates is 
$\frac{24}{2\cdot 2}=6$. The congruence corresponding to $r_{2,2,1,1}^{(2)}$ is unchanged if we simultaneously interchange $x,y$ and $z,w$, hence the number of its $S_4$-translates is 
$\frac{24}{2}=12$. 

The set $\basis$ of all $S_4$-translates of the relations listed in Tables~\ref{table:R_m=3} and \ref{table:R_m=4} has cardinality $|\basis|=196$. 
(This agrees with $d_{(3,2)}(4)+d_{(4,2)}(4)+d_{(2,2,2)}=60+126+10=196$.) 
Table~\ref{table:S_m=4} together with Table~\ref{table:S_m=3} give a system of secondary generators up to degree $8$ in descending multidegrees.  

An inspection of Tables~\ref{table:R_m=4} and \ref{table:monomialsm=4} shows that up to degree $8$, all monomials in the elements of $\mingen\setminus\primary$ with descending multidegree can be reduced to a linear combination of elements listed in Table~\ref{table:S_m=4} using the $S_4$-translates of the relations in Tables~\ref{table:R_m=3} and \ref{table:R_m=4}. 
We just give some sample examples: 
$$[x^2y][xz][xw]\equiv -[x^2z][xy][xw]\equiv [x^2w][xy][xz]\equiv -[x^2y][xz][xw]$$ 
by the relations $r_{3,1,1}$, $r_{3,0,1,1}$, $r_{3,1,0,1}$, 
hence all the above products are congruent to $0$. 
The relations $\sim$ in multidegree $(3,3,1,1)$ can be derived as follows 
(at each congruence we indicate the relation whose $S_m$-translate is used): 
\begin{align*}
&-\frac 13[xy]^2[xz][yw]
\stackrel{r_{3,2,1}^{(1)},r_{3,2,1}^{(2)}}\equiv [x^2y][xyz][yw]
\stackrel{r_{3,2,1}^{(1)}}\equiv [x^2z][xy^2][yw]
\stackrel{r_{3,1,1}}\equiv -[x^2z][y^2w][xy] \\
&\stackrel{r_{3,1,1}}\equiv [x^2y][y^2w][xz]
\stackrel{r_{3,2,1}^{(1)}}\equiv [xy^2][xyw][xz]
\stackrel{r_{2,2,1}^{(1)}}\equiv [x^2y][xyw][yz]
\stackrel{r_{3,2,1}^{(1)}}\equiv [x^2w][xy^2][yz] \\
&\stackrel{r_{3,1,1}}\equiv -[x^2w][y^2z][xy] 
\stackrel{r_{3,1,1}}\equiv [x^2y][y^2z][xw]
\stackrel{r_{3,2,1}^{(1)}}\equiv  [xy^2][xyz][xw]
\stackrel{r_{3,2,1}^{(1)},r_{3,2,1}^{(2)}}\equiv -\frac 13 [xy]^2[xw][yz]
\end{align*}
Furthermore, 
$$-[xy]^3[zw]\stackrel{r_{3,3}}\equiv 3[x^2y][xy^2][zw]
\stackrel{r_{2,1,1,1}}\equiv 3[xyz][xy^2][xw]+3[xyw][xy^2][xz]$$
hence by the above long chain of congruences we conclude 
$$[xy]^3[zw]\equiv -6[x^2y][xyz][yw].$$

Finally we verify the four-variable relations. 
\paragraph{$r_{2,1,1,1}$:}  
Various substitutions into the fundamental relation yield 
\begin{equation}
6[x^2\cdot y\cdot z\cdot w]\equiv  [x^2y][zw]+[x^2z][yw]+[x^2w][yz]
\label{eqno1}
\end{equation}
\begin{equation}
 6[x\cdot xy\cdot z\cdot w]\equiv  [x^2y][zw]+2[xy][xzw] +[xz][xyw]+[xw][xyz] 
\label{eqno2}
\end{equation}
\begin{equation}
6[x\cdot y\cdot xz\cdot w]\equiv  [x^2z][yw]+[xy][xzw] +2[xz][xyw]+[xw][xyz] 
\label{eqno3}
\end{equation}
\begin{equation}
 6[x\cdot y\cdot z\cdot xw]\equiv  [x^2w][yz]+[xy][xzw] +[xz][xyw]+2[xw][xyz] 
\label{eqno4}
\end{equation}
Now $\frac{1}{2}(\eqref{eqno1}+\eqref{eqno2}-\eqref{eqno3}-\eqref{eqno4})$  gives 
$ r_{2,1,1,1}$. 
\label{eqno8}

\paragraph{$r_{2,2,1,1}^{(1)}$, $r_{2,2,1,1}^{(2)}$, $r_{2,2,1,1}^{(3)}$:} 
To make calculations more transparent, introduce  the following temporary notation for the monomials of $Q\setminus P$ of multidegree $(2,2,1,1):$ $a_1=[xy]^2[zw],$ $a_2=[xy][xz][yw],$ $a_3=[xy][yz][xw],$ $b_1=[x^2y][yzw],$ $b_2=[xy^2][xzw],$ $c_1=[x^2z][y^2w],$ $c_2=[x^2w][y^2z],$ $d=[xyz][xyw].$ 
Using  (\ref{eq:x^2y^2}), (\ref{eq:x^2yz}), their $S_4$-translates, and 
\begin{equation}\label{eq:xyzw} 
6[xyzw]\equiv [xy][zw]+[xz][yw]+[xw][yz] 
\end{equation}
various substitutions into the fundamental relation yield 

\begin{eqnarray}  
6[xy\cdot xz\cdot y\cdot w] & \equiv  & \frac{1}{3}a_1+\frac{1}{3}a_2+\frac{1}{3}a_3+b_2+d \label{eq:1}\\
6[xy\cdot x\cdot yz\cdot w] & \equiv  &  \frac{1}{3}a_1+\frac{1}{3}a_2+ \frac{1}{3}a_3+ b_1+d \label{eq:2}\\
6[x^2y\cdot y\cdot z\cdot w] & \equiv & \frac 13 a_1+\frac 13 a_2+\frac 13 a_3+2b_1 \label{eq:3}\\
6[xy\cdot xy\cdot z\cdot w] & \equiv  & \frac{2}{3} a_2+\frac{2}{3}a_3+2d \label{eq:4}\\
6[xz\cdot x\cdot yw\cdot y] & \equiv  &  \frac{1}{6} a_1+\frac{1}{2}a_2+\frac{1}{6}a_3+c_1+d \label{eq:5}\\
6[xw\cdot x\cdot yz\cdot y] & \equiv  &  \frac{1}{6}a_1+\frac{1}{6}a_2+\frac{1}{2}a_3+c_2+d \label{eq:6}
\end{eqnarray}

Equations (\ref{eq:1}), (\ref{eq:2}), (\ref{eq:3}) imply 
\begin{equation} b_1\equiv b_2\equiv d\label{eq:b1=b2=d}
\end{equation} 
(in particular, relation $r_{2,2,1,1}^{(3)}$). From  equations 
(\ref{eq:b1=b2=d}), (\ref{eq:1}), and (\ref{eq:4}) we conclude 
\begin{equation}\label{eq:a1=a2+a3}a_1\equiv a_2+a_3. \end{equation} 
Taking the difference of (\ref{eq:4}) and (\ref{eq:5}), and eliminating $a_1$ by (\ref{eq:a1=a2+a3}) 
we get 
\begin{equation}\label{eq:a3} \frac 13 a_3\equiv c_1-d.\end{equation}  
Taking the difference of   (\ref{eq:4}) and (\ref{eq:6}), and eliminating $a_1$ by (\ref{eq:a1=a2+a3}) 
we get 
\begin{equation}\label{eq:a2} \frac 13 a_2\equiv c_2-d, \mbox{ hence }r_{2,2,1,1}^{(2)}.\end{equation}  
Finally, (\ref{eq:a1=a2+a3}), (\ref{eq:a3}), (\ref{eq:a2}) yield 
$$a_1\equiv 3c_1+3c_2-6d, \mbox{ hence }r_{2,2,1,1}^{(1)}.$$  

\paragraph{$r_{3,1,1,1}$:} 
\begin{equation}\label{eq:[3111]1}
0\equiv 6[x^3yzw]=6[x\cdot xy\cdot xz\cdot w]
\equiv \frac{2}{3}[xy][xz][xw]+ [x^2y][xzw]+[x^2z][xyw]
\end{equation} 
Permuting $y,z,w$ we get 
\begin{equation}\label{eq:[3111]2}
0\equiv \frac{2}{3}[xy][xz][xw]+ [x^2y][xzw]+[x^2w][xyz]
\end{equation}
\begin{equation}\label{eq:[3111]3}
0\equiv \frac{2}{3}[xy][xz][xw]+ [x^2z][xyw]+[x^2w][xyz]
\end{equation}
Now $\eqref{eq:[3111]1}+\eqref{eq:[3111]2}-\eqref{eq:[3111]3}$  gives $r_{3,1,1,1}$. 

%%%%%%%%%%%%%%%%%%%%%%%%%%%%%%%%%%%%%%%%%

\section{The proof of Theorem~\ref{thm:lowerbound}}\label{sec:lowerbound} 

First we point out that the kernel $\ker(\varphi_{n,m}^2)$ of the restriction of $\varphi$ to $\freeinvar_{n,m}^2$ is described by the {\it second fundamental theorem for vector invariants of the full orthogonal group} (cf. Theorem 2.17.A in \cite{weyl}).  

\begin{proposition}\label{prop:orthogonal} 
The ideal $\ker(\varphi_{n,m}^2)$ is (minimally) generated by the $\gl_m$-submodule of $\freeinvar_{n,m}^2$ spanned by $J$. 
\end{proposition}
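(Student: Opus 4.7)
The plan is to reduce the statement to the second fundamental theorem of invariant theory for the orthogonal group $\ooo_{n-1}$ by identifying the image of $\varphi_{n,m}^2$ with a ring of $\ooo_{n-1}$-invariants, and then to recognize $J$ as the principal $n\times n$ minor of the corresponding Gram matrix.

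First I would carry out the following change of picture. Let $u:=(1,\ldots,1)\in \mc^n$ and write $H\cong \ooo_{n-1}$ for the stabilizer of $u$ in the orthogonal group $\ooo_n$ (which contains the permutation subgroup $S_n$). Decompose each column $v_j$ of $(x_{ij})$ as $v_j=\alpha_j u+w_j$ with $w_j\in u^{\perp}\cong\mc^{n-1}$; then $\alpha_j=\tfrac{1}{n}[x_j]$ and $\langle w_j,w_k\rangle=[x_jx_k]-\tfrac{1}{n}[x_j][x_k]$. The first fundamental theorem for $\ooo_{n-1}$ applied to the $w_j$, together with the fact that the $\alpha_j$ are free $H$-invariants, shows that $\varphi_{n,m}^2(\freeinvar_{n,m}^2)=\mc[V^m]^H$. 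Accordingly, introduce on the polynomial ring $\freeinvar_{n,m}^2=\mc[t(x_j),t(x_jx_k)]$ the new free generators $A_j:=t(x_j)$ and $B_{jk}:=t(x_jx_k)-\tfrac{1}{n}t(x_j)t(x_k)$ (with $B_{kj}=B_{jk}$); then $\varphi_{n,m}^2$ sends $A_j\mapsto[x_j]$ and $B_{jk}\mapsto\langle w_j,w_k\rangle$.

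Next I would invoke Weyl's second fundamental theorem (Theorem~2.17.A of \cite{weyl}) for $\ooo_{n-1}$ acting on the $m$ vectors $w_1,\ldots,w_m$ in the $(n-1)$-dimensional space $u^{\perp}$: the relations among the $\langle w_j,w_k\rangle$ are generated by the $n\times n$ minors of the symmetric $m\times m$ matrix $(\langle w_j,w_k\rangle)_{j,k=1}^m$. Translating back, $\ker(\varphi_{n,m}^2)$ is the ideal in $\mc[A,B]$ generated by the $n\times n$ minors of $(B_{jk})_{j,k=1}^m$. Applying the Schur complement formula to the determinant defining $J$ in (\ref{eq:relthree}) yields
\[
J=n\cdot\det\bigl((B_{jk})_{j,k=1}^n\bigr),
\]
so $J$ is (up to the scalar $n$) the principal $n\times n$ minor of $(B_{jk})$ with indices $\{1,\ldots,n\}$.

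The remaining task is to show that $\gl_m\cdot J$ coincides with the linear span of all $n\times n$ minors of $(B_{jk})$. By Proposition~\ref{prop:gram} and Lemma~\ref{lemma:m1m2}, $J$ is a highest weight vector of weight $(2^n)$, so $\gl_m\cdot J\cong V_{(2^n)}$. The substitution $S_{jk}\mapsto B_{jk}$ defines a $\gl_m$-equivariant injection of polynomial rings $\mc[S_{jk}]\hookrightarrow\freeinvar_{n,m}^2$ (the $A_j$ and $B_{jk}$ are free generators). For a generic symmetric matrix $(S_{jk})$, the classical plethysm decomposition $S^n(V_{(2)})=\bigoplus_{\mu\vdash n}V_{(2\mu)}$, combined with the weight calculation showing that the principal minor is a highest weight vector of weight $(2^n)$, identifies the linear span of all $n\times n$ minors of $(S_{jk})$ with the irreducible summand $V_{(2^n)}$. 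Transporting through the injection, the span of the $n\times n$ minors of $(B_{jk})$ in $\freeinvar_{n,m}^2$ is irreducible, isomorphic to $V_{(2^n)}$, and contains $J$, hence equals $\gl_m\cdot J$; therefore the ideal generated by $\gl_m\cdot J$ is exactly the ideal of $n\times n$ minors of $(B_{jk})$, which is $\ker(\varphi_{n,m}^2)$. Minimality is then immediate from a degree count: every $n\times n$ minor of the degree-$2$ entries $B_{jk}$ has degree $2n$, so $\ker(\varphi_{n,m}^2)$ is concentrated in degrees $\geq 2n$ and $(\freeinvar_{n,m}^2)^{(+)}\cdot\ker(\varphi_{n,m}^2)$ in degrees $>2n$; since $\gl_m\cdot J$ is nonzero and sits in degree exactly $2n$, it meets the latter trivially and is thus a minimal generating $\gl_m$-submodule. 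The main obstacle I anticipate is the final representation-theoretic identification: verifying cleanly that the span of $n\times n$ minors of the non-generic matrix $(B_{jk})$ inside $\freeinvar_{n,m}^2$ really is the full irreducible $V_{(2^n)}$ and not a proper quotient, which is what the $\gl_m$-equivariant polynomial-ring injection above is designed to control.
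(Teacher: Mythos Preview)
Your proof is correct and follows essentially the same route as the paper's: reduce to the orthogonal group $\ooo(V_0)\cong\ooo_{n-1}$ via the decomposition $v_j=\alpha_j u+w_j$, change to the free generators $A_j,B_{jk}$, and invoke Weyl's second fundamental theorem. The paper simply cites Theorem~2.17.A of \cite{weyl} as asserting that the kernel is generated by the polarizations of $J$, whereas you supply the details the paper leaves implicit --- the Schur complement identity $J=n\cdot\det\bigl((B_{jk})_{j,k=1}^n\bigr)$, the identification of the span of all $n\times n$ minors with the irreducible $V_{(2^n)}=\gl_m\cdot J$, and the degree-count for minimality --- so your argument is a fleshed-out version of the same proof rather than a different one.
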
 

\begin{proof} Denote by $\ooo(V)$ the orthogonal group, i.e. $\ooo(V)$ consists of the linear transformations of $V=\mc^n$ preserving the standard quadratic form 
$(v_1,\ldots,v_n)\mapsto \sum_{i=1}^n v_i^2$. The orthogonal complement $V_0$ of $(1,\ldots,1)\in V$ consists of the vectors in $V$ with zero coordinate sum. We identify the stabilizer of $(1,\ldots,1)$ in $\ooo(V)$ with $\ooo(V_0)$ in the obvious way. Note that the elements of $S_n$ as transformations on $V$ do belong to $\ooo(V_0)$. As an immediate corollary of the {\it first fundamental theorem on vector invariants of the orthogonal group} (cf. Theorem 2.11.A in \cite{weyl}) 
we conclude that 
$\varphi(\freeinvar_{n,m}^2)=\mc[V^m]^{\ooo(V_0)}\subset R_{n,m}$. 
Set  $b_{ij}:=[x_ix_j]-\frac 1n[x_i][x_j]$ for $1\leq i,j\leq m$. 
The projection from $V\to V_0$ with kernel spanned by $(1,\ldots,1)$ induces an 
identification of $\mc[V_0^m]^{\ooo(V_0)}$ with the  subalgebra of $\mc[V^m]^{\ooo(V_0)}$ generated by the $b_{ij}$. Moreover,  $\mc[V^m]^{\ooo(V_0)}$ is an $m$-variable polynomial ring over 
$\mc[V_0^m]^{\ooo(V_0)}$ generated by $[x_1],\ldots,[x_m]$. 
Denote by $L$ the subalgebra of $\freeinvar_{n,m}^2$ generated by 
$u_{ij}:=t(x_ix_j)-\frac 1n t(x_i)t(x_j)$, $1\leq i,j\leq n$. By the above considerations, the ideal 
$\ker(\varphi_{n,m}^2)$ is generated by the kernel of the restriction $\varphi\vert_L$ of $\varphi$ to $L$. 
Now the kernel of $\varphi\vert_L:L\to \mc[V_0^m]^{\ooo(V_0)}$, $u_{ij}\mapsto b_{ij}$ is given by 
Theorem 2.17.A in \cite{weyl}, stating that it is generated by the polarizations of $J$. 
\end{proof}

Let $I$ be a highest weight vector in $\ker(\varphi)$ of weight $2^n=(2,\ldots,2)$.  
We claim that $I$ necessarily belongs to $\freeinvar_{n,m}^2$, and if $I$ is contained in 
$\freeinvar^{(+)}\cdot \ker(\varphi)$, then $I$ is necessarily contained in 
$(\freeinvar_{n,m}^2)^{(+)}\cdot \ker(\varphi_{n,m}^2)$. It is clear that Theorem~\ref{thm:lowerbound} follows from this claim and Proposition~\ref{prop:orthogonal}. 

To prove this claim, given a polynomial $\gl_m$-module $U$ and a partition $\lambda\in\partitions_m$, denote by $\lambda(U)$ the $\lambda$-isotypic component of $U$ (i.e. the sum of the $\gl_m$-submodules of $U$ isomorphic to the irreducible $\gl_m$-module $V_{\lambda}$). 
Write $\lambda\subset\mu$ (where $\lambda,\mu\in\partitions_m$) if $\lambda_i\leq\mu_i$ 
for $i=1,\ldots,m$.   
It follows from Pieri's rule that denoting by $A$ the ideal in $\freeinvar$ generated by the $t(w)$ with $\deg(w)\geq 3$, we have $A\subset \sum_{(3)\subset \lambda}
\lambda(\freeinvar)$. Since $\freeinvar_{n,m}^2$ is a $\gl_m$-module direct complement of $A$, the 
$2^n$-isotypic component of $\freeinvar$ is contained in $\freeinvar_{n,m}^2$. In particular, 
$I$ belongs to $\freeinvar_{n,m}^2$.  

Again by Pieri's rule, 
the ideal generated by $\lambda(\freeinvar)$ is contained in $\sum_{\lambda\subset\mu}\mu(\freeinvar)$. 
So if $I\in\freeinvar^{(+)}\cdot\ker(\varphi)$, then 
$I\in\sum_{\lambda\subsetneq 2^n} \freeinvar^{(+)}\lambda(\ker(\varphi))$. 
Again since $\freeinvar_{n,m}^2$ is a $\gl_m$-module complement of $A$, we conclude that 
$\lambda(\ker(\varphi))\leq\freeinvar_{n,m}^2$ whenever $\lambda\subsetneq 2^n$, 
hence $I\in\sum_{\lambda\subsetneq 2^n} \freeinvar^{(+)}\lambda(\ker(\varphi_{n,m}^2))$. 
Using the retraction $\freeinvar\to\freeinvar_{n,m}^2$ with kernel $A$, we conclude that $I$ is contained in $(\freeinvar_{n,m}^2)^{(+)}\cdot\ker(\varphi_{n,m}^2)$. 

%%%%%%%%%%%%%%%%%%%%%%%%%%%%%%%%%%%%%%%%
%\newpage 
\begin{table}[p]
\begin{center}
\begin{tabular}{c|c}
Multidegree & $3$-variable monomials  in the elements of $\mingen\setminus\primary$ \\ \hline \hline 
$(1,1,1)$ & $[xyz]$ \\ \hline \hline 
$(2,1,1)$ & $[xy][xz]$ \\ \hline \hline
$(3,1,1)$ & $[x^2y][xz]\sim [x^2z][xy]$ \\ \hline 
$(2,2,1)$ &  $[x^2y][yz]\sim [xy^2][xz]$,  $[xyz][xy]^\star$ \\ \hline \hline
%$(4,2,0)$ & $[x^2y]^2 \\ \hline 
$(4,1,1)^\star$ & $[x^2y][x^2z]^\star$ \\ \hline 
%$(3,3,0)$ & $[xy]^3$, $[x^2y][xy^2]$ \\ \hline 
$(3,2,1)$ &  $[x^2y][xyz]\sim [x^2z][xy^2] \sim [xy]^2[xz]$ \\ \hline 
$(2,2,2)$ &  $[xyz]^2\sim [x^2y][yz^2]\sim [x^2z][y^2z] \sim [xy^2][xz^2] $,  $[xy][xz][yz]^\star$  \\ \hline \hline 
$(4,2,1)^\star$ &  $[xy]^2[x^2z]^\star$,  $[x^2y][xy][xz]^\star$ \\ \hline 
 $(3,3,1)^\star$ & $[xyz][xy]^2{}^\star$, $[x^2y][xy][yz]^\star $, $[xy^2][xy][xz]^\star$ \\ \hline
 $(3,2,2)$ & $[x^2y][xz][yz]\sim [xy^2][xz]^2\sim [x^2z][xy][yz] \sim [xy]^2[xz^2]$, $[xyz][xy][xz]^\star$, \\ \hline \hline 
 $(5,2,1)^\star$ & $[x^2y]^2[xz]^{\star}$, $[x^2y][x^2z][xy]^\star$ \\ \hline
 $(4,3,1)^\star$ & $[x^2y][x^2y][yz]^\star$, $[x^2y][xyz][xy]^\star$, $[x^2y][xy^2][xz]^\star$, $[x^2z][xy^2][xy]^\star$, $[xy]^3[xz]^\star$ \\ \hline 
$(4,2,2)^\star$ & 
\begin{tabular}{c} 
$[x^2y][x^2z][yz]^\star$, $[x^2y][xyz][xz]^\star$, $[x^2y][xy][xz^2]^\star$, \\ 
$[x^2z][xy^2][xz]^\star$, $[x^2z][xyz][xy]^\star$, 
$[xy]^2[xz]^2{}^\star$ 
\end{tabular}
\\ \hline 
$(3,3,2)^\star$ & 
\begin{tabular}{c} 
$[x^2y][xyz][yz]^\star$, $[x^2y][xy][yz^2]^\star$, $[x^2y][xz][y^2z]^\star$, $[x^2z][xy^2][yz]^\star$, \\
$[x^2z][xy][y^2z]^\star$, $[xy^2][xyz][xz]^\star$, $[xy^2][xy][xz^2]^\star$, $[xyz]^2[xy]^\star$, $[xy]^2[xz][yz]^\star$ 
\end{tabular}
  \end{tabular}
\end{center}
\caption[]{$3$-variable monomials in $\mingen\setminus\primary$}%{Relations in the case $m=3$}
\label{table:monomialsm=3}
\end{table}

\begin{table}[hbtp]
\begin{center}
\begin{tabular}{c|c|c|c|c}
Multidegree & Congruence & Relation & $\sharp\{S_3\mbox{ translates}\}$ &  $\sharp\{S_4\mbox{ translates}\}$\\ \hline \hline 
$(3,2,0)$ & $[xy][x^2y]\equiv 0$ & $r_{3,2}$ & $6$  & $12$ \\ \hline 
$(3,1,1)$ & $[x^2y][xz]+[x^2z][xy]\equiv 0$ & $r_{3,1,1}$ & $3$ & $12$ \\ \hline 
$(2,2,1)$ & $[x^2y][yz]-[xy^2][xz]\equiv 0$ & $r_{2,2,1}^{(1)}$ & $3$ & $12$\\ \hline 
$(2,2,1)$ & $[xy][xyz]\equiv 0$ & $r_{2,2,1}^{(2)}$ & $3$ &  $12$ \\ \hline \hline
$(4,2,0)$ & $[x^2y]^2\equiv 0$ & $r_{4,2}$ & $6$ & $12$ \\ \hline 
$(4,1,1)$ & $[x^2y][x^2z]\equiv 0$ & $r_{4,1,1}$ & $3$ & $12$ \\ \hline 
$(3,3,0)$ & $[xy]^3+3[x^2y][xy^2]\equiv 0$ & $r_{3,3}$ & $3$ & $6$ \\ \hline 
$(3,2,1)$ & $[x^2y][xyz]-[x^2z][xy^2]\equiv 0$ & $r_{3,2,1}^{(1)}$ & $6$  & $24$ \\ \hline 
$(3,2,1)$ & $[xy]^2[xz]+3[x^2z][xy^2]\equiv 0$ & $r_{3,2,1}^{(2)}$ & $6$ & $24$  \\ \hline 
$(2,2,2)$ & $[xy][yz][zx]\equiv 0$ & $r_{2,2,2}^{(1)}$ & $1$ & $4$ \\ \hline 
$(2,2,2)$ & $[xyz]^2-[xy^2][xz^2]\equiv 0$ & $r_{2,2,2}^{(2)}$ & $3$ & $12$ 
\end{tabular}
\end{center}
\caption[Relations in the case $m=3$]{Relations in the case $m=3$}
\label{table:R_m=3}
\end{table}

\begin{table}[hbtp]
\begin{center}
\begin{tabular}{c|c|l}
Degree & Multidegree & Secondary generators \\ \hline \hline 
$0$ & $(0,0,0)$ & $1$ \\ \hline 
$2$ & $(1,1,0)$ & $[xy]$ \\ \hline 
$3$ & $(2,1,0)$ & $[x^2y]$  \\ \hline 
$3$ & $(1,1,1)$ & $[xyz]$ \\ \hline 
$4$ & $(2,2,0)$ & $[xy]^2$  \\ \hline 
$4$ & $(2,1,1)$ & $[xy][xz]$ \\ \hline 
$5$ & $(3,1,1)$ & $[x^2y][xz]$ \\ \hline 
$5$ & $(2,2,1)$ &  $[x^2y][yz]$ \\ \hline 
$6$ & $(3,3,0)$ & $[x^2y][xy^2]$ \\ \hline 
$6$ & $(3,2,1)$ & $[x^2y][xyz]$   \\ \hline 
$6$ & $(2,2,2)$ & $[xyz]^2$ \\ \hline 
$7$ & $(3,2,2)$ & $[x^2y][xz][yz]$
\end{tabular}
\end{center}
\caption[Secondary generators in the case $m=3$]{Secondary generators in the case $m=3$}
\label{table:S_m=3}
\end{table}

\begin{table}[hbtp]
\begin{center}
\begin{tabular}{c|c|c|c}
Multidegree & Congruence & Relation & $\sharp$ of $S_4$-translates \\ \hline \hline 
$(2,1,1,1)$ & $[x^2y][zw]\equiv  [xyz][xw]+[xyw][xz]$ & $r_{2,1,1,1}$ & $12$ \\ \hline 
$(3,1,1,1)$ & $[xy][xz][xw]\equiv -3[x^2y][xzw]$ & $r_{3,1,1,1}$ & $12$ \\ \hline 
$(2,2,1,1)$ & $[xy]^2[zw]\equiv 3[x^2z][y^2w]+3[x^2w][y^2z]-6[xyz][xyw]$ & $r_{2,2,1,1}^{(1)}$ & $6$ \\ \hline 
$(2,2,1,1)$ & $[xy][xz][yw]\equiv 3[x^2w][y^2z]-3[xyz][xyw]$ & $r_{2,2,1,1}^{(2)}$ & $12$ \\ \hline 
$(2,2,1,1)$ & $[x^2y][yzw]\equiv [xyz][xyw]$ & $r_{2,2,1,1}^{(3)}$ & $12$
\end{tabular}
\end{center}
\caption[Relations in the case $m=4$]{$4$-variable relations in the case $m=4$}
\label{table:R_m=4}
\end{table}

\begin{table}[hbtp]
\begin{center}
\begin{tabular}{c|l}
 Multidegree & Secondary generators \\ \hline \hline 
 $(1,1,1,1)$ & $[xy][zw]$, $[xz][yw]$, $[xw][yz]$ \\ \hline 
$(2,1,1,1)$ & $[xy][xzw]$, $[xyw][xz]$, $[xyz][xw]$ \\ \hline  
$(3,1,1,1)$ & $[x^2y][xzw]$  \\ \hline 
$(2,2,1,1)$ & $[x^2z][y^2w]$, $[x^2w][y^2z]$, $[xyz][xyw]$ \\ \hline 
$(3,2,1,1)$ & $[x^2y][xz][yw]$  \\ \hline 
$(2,2,2,1)$ & $[xy][xzw][yz]$, $[xyw][xz][yz]$, $[xy][xz][yzw]$ \\ \hline 
 $(3,3,1,1)$ & $[x^2y][xyz][yw]$ \\ \hline 
$(3,2,2,1)$ &  $[xyz]^2[xw]$ \\ \hline 
$(2,2,2,2)$ & $[x^2y][yz][zw^2]$, $[x^2z][y^2w][zw]$, $[x^2w][y^2z][zw]$
\end{tabular}
\end{center}
\caption[Secondary generators in the case $m=4$]{$4$-variable secondary generators}
\label{table:S_m=4}
\end{table}

\begin{table}[hbtp]
\begin{center}
\begin{tabular}{c|c}
Multidegree & $4$-variable monomials  in the elements of $\mingen\setminus\primary$\\ \hline \hline 
$(1,1,1,1)$ & $[xy][zw]$, $[xz][yw]$, $[xw][yz]$\\
\hline\hline 
$(2,1,1,1)$ & \begin{tabular}{c} $[xy][xzw]$, $[xz][xyw]$, $[xw][xyz]$, \\ 
$[x^2y][zw]$, $[x^2z][yw]$, $[x^2w][yz]$ \end{tabular} 
\\ \hline \hline
$(3,1,1,1)$ & $[x^2y][xzw]\sim [x^2z][xyw]\sim [x^2w][xyz]\sim [xy][xz][xw]$ \\ \hline 
$(2,2,1,1)$ & \begin{tabular}{c} $[xyz][xyw]\sim [x^2y][yzw] \sim [xy^2][xzw]$, $[x^2z][y^2w]$, $[x^2w][y^2z]$, \\ 
 $[xy]^2[zw]$, $[xy][xz][yw]$, $[xy][xw][yz]$ 
 \end{tabular} \\ \hline \hline
$(4,1,1,1)^\star$ &  $[x^2y][xz][xw]^\star$, $[x^2z][xy][xw]^\star$,  $[x^2w][xy][xz]^\star$ \\ \hline 
$(3,2,1,1)$ & 
\begin{tabular}{c} 
 $[x^2y][xz][yw] \sim [x^2y][xw][yz]\sim [x^2z][xy][yw]\sim$ \\ $\sim [x^2w][xy][yz]\sim [xy^2][xz][xw] 
 \sim [xy]^2[xzw]$, \\ 
$[x^2y][xy][zw]^\star$, $[xyz][xy][xw]^\star$, $[xyw][xy][xz]^\star$ 
\end{tabular} \\ \hline 
$(2,2,2,1)$ & 
\begin{tabular}{c} $[xy][yz][xzw]\sim [x^2z][yw][yz]\sim [xz^2][xy][yw]$, \\ 
$[xz][yz][xyw]\sim [x^2y][yz][zw]\sim [xy^2][xz][zw]$, \\ 
$[xz][xy][yzw]\sim [xz][xw][y^2z] \sim [xy][xw][yz^2]$, \\
$[x^2w][yz]^2$, $[xy]^2[z^2w]$, $[xz]^2[y^2w]$, \\
$[xyz][xy][zw]^\star$,$[xyz][xz][yw]^\star$, $[xyz][xw][yz]^\star$
\end{tabular} 
 \\ \hline \hline
$(5,1,1,1)^\star$ &  $[x^2y][x^2z][xw]^\star$, $[x^2y][x^2w][xz]^\star$, $[x^2z][x^2w][xy]^\star$  \\ \hline 
$(4,2,1,1)^\star$ &  
\begin{tabular}{c} 
$[x^2y]^2[zw]^\star$, $[x^2y][x^2z][yw]^\star$, $[x^2y][x^2w][yz]^\star$, $[x^2y][xyz][xw]^\star$, \\ 
$[x^2y][xyw][xz]^\star$, 
$[x^2y][xy][xzw]^\star$, 
$[x^2z][xy^2][xw]^\star$, $[x^2z][xyw][xy]^\star$, \\ 
$[x^2w][xy^2][xz]^\star$, $[x^2w][xyz][xy]^\star$, 
$[xy]^2[xz][xw]^\star$
\end{tabular}
 \\ \hline 
 $(3,3,1,1)$ & \begin{tabular}{c} 
  $[x^2y][xyz][yw]\sim [x^2y][y^2z][xw]\sim [y^2z][xy][x^2w]\sim 
 [xy^2][yz][x^2w]\sim$ \\ $\sim [x^2y][yz][xyw] 
 \sim [xy^2][xz][xyw]\sim [x^2y][xz][y^2w]\sim [x^2z][xy][y^2w] \sim$ \\  
 $\sim [x^2z][xy^2][yw]\sim [xy^2][xyz][xw]  \sim  [xy]^2[xz][yw]\sim 
[xy]^2[yz][xw]\sim $\\ 
$\sim [x^2y][xy^2][zw]\sim [xy]^3[zw]$,\\ 
$[xyz][xy][xyw]^\star$, 
 $[x^2y][xy][yzw]^\star$, 
 $[xy^2][xy][xzw]^\star$ 
 \end{tabular} 
  \\ \hline
 $(3,2,2,1)$ & \begin{tabular}{c} $[xyz]^2[xw]\sim 
 [x^2y][xyz][zw]\sim  [x^2z][xyz][yw]\sim [x^2z][xy^2][zw]\sim$ \\
$\sim [x^2y][xz^2][yw]\sim  [x^2y][yz^2][xw]\sim  [x^2z][y^2z][xw] \sim [xy^2][xz^2][xw]\sim $
\\ $\sim [y^2z][xz][x^2w]\sim  [yz^2][xy][x^2w] \sim [xy]^2[xz][zw]\sim [xz]^2[xy][yw]$, \\
  $[xy][xz][yz][xw]^\star$,     $[x^2y][xz][yzw]^\star$,  $[x^2z][xy][yzw]^\star$, 
  $[x^2y][yz][xzw]^\star$, \\  $[x^2z][yz][xyw]^\star$ \
   $[x^2y][xy][z^2w]^\star$,  $[x^2z][xz][y^2w]^\star$, 
  $[xy^2][xz][xzw]^\star$, \\  $[xyz][xy][xzw]^\star$, 
  $[xz^2][xy][xyw]^\star$,  $[xyz][xz][xyw]^\star$,
  $[xyz][yz][x^2w]^\star$
  \end{tabular} 
 \\ \hline 
$(2,2,2,2)$ & 
\begin{tabular}{c} 
$[xz][yz][xw][yw]\sim [x^2y][yz][zw^2]\sim [xy^2][xz][zw^2]\sim  [x^2y][yw][z^2w]\sim [xy^2][xw][z^2w]$,\\
 $[xy][yz][xw][zw] \sim[x^2z][y^2w][zw]\sim [x^2z][yz][yw^2]\sim [xz^2][xy][yw^2]\sim  [xz^2][xw][y^2w]$,  \\
$[xy][xz][yw][zw]\sim [x^2w][y^2z][zw]\sim [x^2w][yz^2][yw]\sim [xz][xw^2][y^2z]\sim [yz^2][xy][xw^2]$, \\
 $[xyz][xy][zw^2]^\star$,  $[xyz][xz][yw^2]^\star$, $[xyz][yz][xw^2]^\star$,  $[x^2y][zw][yzw]^\star$, \\
 $[x^2z][yw][yzw]^\star$,  $[xy][xzw][yzw]^\star$, $[xyw][xz][yzw]^\star$, $[xyz][xw][yzw]^\star$ \\
 $[xy^2][zw][xzw]^\star$, $[y^2z][xw][xzw]^\star$, 
$[xyw][yz][xzw]^\star$,  $[xyz][yw][xzw]^\star$, \\ 
$[xz^2][yw][xyw]^\star$, $[yz^2][xw][xyw]^\star$, 
$[xyz][zw][xyw]^\star$, $[xyw][xy][z^2w]^\star$, \\ $[xzw][xz][y^2w]^\star$, 
 $[yzw][yz][x^2w]^\star$, 
 \\ $[xy]^2[zw]^2$, $[xz]^2[yw]^2$,  
 $[yz]^2[xw]^2$
\end{tabular}
  \end{tabular}
\end{center}
\caption[]{$4$-variable monomials in $\mingen\setminus\primary$}
\label{table:monomialsm=4}
\end{table}

%%%%%%%%%%%%%%%%%%%%%%%%%%%%%%%%%%%%%%%%%%%%%%%
\newpage

\end{document}